\newcommand{\R}{{\mathbb R}}       
\newcommand{\Z}{{\mathbb Z}}       
\newcommand{\DD}{{\mathcal D}}
\newcommand{\HH}{{\mathcal H}}
\newcommand{\NN}{{\mathcal N}}
\newcommand{\CH}{{\mathcal Ch}}
\newcommand{\diam}{{\rm diam}}
\newcommand{\dist}{{\rm dist}}
\newcommand{\rf}[1]{{(\ref{#1})}}
\newcommand{\supp}{\operatorname{supp}}
\newcommand{\vphi}{{\varphi}}
\newcommand{\ve}{{\varepsilon}}
\newcommand{\vv}{{\vspace{2mm}}}
\newcommand{\vvv}{{\vspace{3mm}}}
\newcommand{\wt}[1]{{\widetilde{#1}}}
\newcommand{\wfi}{{\wt \vphi_r}}
\newcommand{\wf}{{\wt \vphi}}
\newtheorem{theorem}{Theorem}[section]
\newtheorem*{theorema*}{Theorem A}
\newtheorem*{theoremb*}{Theorem B}
\newtheorem*{theoremc*}{Theorem C}
\newtheorem*{theoremd*}{Theorem D}
\newtheorem{lemma}[theorem]{Lemma}
\newtheorem{coro}[theorem]{Corollary}
\newtheorem{propo}[theorem]{Proposition}
\theoremstyle{definition}
\theoremstyle{remark}
\numberwithin{equation}{section}
\begin{document}

\begin{abstract}
Let $E$ be a set in $\R^d$ with finite $n$-dimensional Hausdorff measure $\HH^n$ such
that $\liminf_{r\to0}r^{-n}\HH^n(B(x,r)\cap E)>0$ for $\HH^n$-a.e.\ $x\in E$.
In this paper it is shown that $E$ is $n$-rectifiable if and only if 
$$\int_0^1 \left|\frac{\HH^n(B(x,r)\cap E)}{r^n} - \frac{\HH^n(B(x,2r)\cap E)}{(2r)^n}\right|^2\,\frac{dr}r< \infty
\quad
\mbox{for $\HH^n$-a.e.\ $x\in E$,}$$
and also if and only if
$$
\lim_{r\to0}\left(\frac{\HH^n(B(x,r)\cap E)}{r^n} - \frac{\HH^n(B(x,2r)\cap E)}{(2r)^n}\right)
 =0\quad
\mbox{for $\HH^n$-a.e.\ $x\in E$.}$$
Other more general results involving Radon measures are also proved.
\end{abstract}

\title{Rectifiability via a square function and Preiss' theorem}

\author{Xavier Tolsa and Tatiana Toro}
\address{Xavier Tolsa, ICREA/Universitat Aut\`onoma de Barcelona, Bellaterra 08913, Catalonia}
\address{Tatiana Toro, Department of Mathematics, Box 354350, University of Washington, Seattle, WA 98195-4350}

\thanks{X.T.\ was partially supported by 
the ERC grant 320501 of the European Research
Council (FP7/2007-2013) 
and by the
grants
2009SGR-000420 (Catalonia) and MTM-2010-16232 (Spain). T.T.\ was partially supported by an NSF grant DMS-0856687, the Simons Foundation grant \# 228118 and the Robert R. \& Elaine F. Phelps Professorship in Mathematics
}

\maketitle

\section{Introduction}

A set $E\subset \R^d$ is called $n$-rectifiable if there are Lipschitz maps
$f_i:\R^n\to\R^d$, $i=1,2,\ldots$, such that 
\begin{equation}\label{eq001}
\HH^n\biggl(\R^d\setminus\bigcup_i f_i(\R^n)\biggr) = 0,
\end{equation}
where $\HH^n$ stands for the $n$-dimensional Hausdorff measure. On the other hand, using Mattila's definition 
\cite[Definition 16.6]{Mattila-llibre} we say that a 
a Radon measure $\mu$ on $\R^d$ is called $n$-rectifiable if $\mu$ vanishes out of a rectifiable
set $E\subset\R^d$ and moreover $\mu$ is absolutely continuous with respect to $\HH^n|_E$.

One of the main objectives
of geometric measure theory consists in characterizing $n$-rectifiable sets and measures in 
different ways. For instance, there exist characterizations in terms of the existence of
approximate tangent $n$-planes, in terms of the existence of densities, or in terms of the
size of projections. These results stem from the works for the case $n=1$ in the plane by Besicovitch  at beginning of the last century
and have been extended to the whole range of dimensions in the space by different authors.
See for example the book by Mattila \cite{Mattila-llibre} for more details about this beautiful theory.

Preiss' theorem \cite{Preiss} is one of the great landmarks of geometric measure theory. This
asserts that a Radon measure $\mu$ on $\R^d$ is $n$-rectifiable if and only if the density
\begin{equation}\label{teopreiss}
\Theta^n(x,\mu)=\lim_{r\to0}\frac{\mu(B(x,r))}{r^n}
\end{equation}
exists and is positive for $\mu$-a.e.\ $x\in\R^d$. In \cite{DKT} and \cite{PTT} the authors proved that 
the rate of convergence of the density ratio to its limit yields additional information over the regularity of the support of the measure.
In this paper we will prove two variants of Preiss' result. One can be considered as a square function version
of Preiss' theorem. The other shows that the condition on the existence of the limit 
\rf{teopreiss} can be weakened considerably. 
 To state our results we need first to introduce some additional notation.

Given a Radon measure $\mu$ and $x\in\R^d$ we denote
$$\Theta^{n,*}(x,\mu)=\limsup_{r\to0}\frac{\mu(B(x,r))}{r^n},
\qquad 
\Theta^{n}_*(x,\mu)=\liminf_{r\to0}\frac{\mu(B(x,r))}{r^n}.
$$
These are the upper and lower $n$-dimensional densities of $\mu$ at $x$. If they coincide,
they are denoted by $\Theta^{n}(x,\mu)$. In the case when $\mu=\HH^n|_E$ for some set
$E\subset\R^d$, we also write $\Theta^{n,*}(x,E)$, $\Theta^{n}_*(x,E)$, $\Theta^{n}(x,E)$
instead of $\Theta^{n,*}(x,\HH^n|_E)$, $\Theta^{n}_*(x,\HH^n|_E)$, $\Theta^{n}(x,\HH^n|_E)$,
respectively.

The main result of this paper reads as follows.

\begin{theorem}\label{teomain}
Let $\mu$ be a Radon measure in $\R^d$ such that $0< \Theta_*^n(x,\mu)\leq \Theta^{n,*}(x,\mu)<\infty$
for $\mu$-a.e.\ $x\in\R^d$. The following are equivalent:
\vspace{1mm}
\begin{itemize}
\item[(a)] $\mu$ is $n$-rectifiable.
\vspace{3mm}
\item[(b)] 
$\displaystyle
\int_0^1 \left|\frac{\mu(B(x,r))}{r^n} - \frac{\mu(B(x,2r))}{(2r)^n}\right|^2\,\frac{dr}r< \infty
\;$ for $\mu$-a.e.\ $x\in\R^d$.
\vspace{3mm}
\item[(c)] 
$\displaystyle
\lim_{r\to0}\left(\frac{\mu(B(x,r))}{r^n} - \frac{\mu(B(x,2r))}{(2r)^n}\right) =0
\;$ for $\mu$-a.e.\ $x\in\R^d$.
\end{itemize}
\end{theorem}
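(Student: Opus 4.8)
The plan is to prove the chain of implications $(a)\Rightarrow(b)\Rightarrow(c)\Rightarrow(a)$, using Preiss' theorem as a black box only in the last implication, and doing the genuinely new work in $(a)\Rightarrow(b)$.

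For $(a)\Rightarrow(b)$, I would fix a rectifiable set $E$ carrying $\mu$ and write $\mu = g\,\HH^n|_E$ with $g$ a nonnegative Borel density. The key point is that for $n$-rectifiable measures the density ratio $\mu(B(x,r))/r^n$ converges to $\omega_n g(x)$ at a controlled rate: after decomposing $E$ into pieces lying on Lipschitz graphs and using Lebesgue density / differentiation theorems, one reduces (modulo sets of arbitrarily small measure) to the model case $\mu = \HH^n|_{G}$ where $G$ is a Lipschitz graph over an $n$-plane with small Lipschitz constant, and $x$ is a point of density $1$ of $G$. There one estimates $\big|\mu(B(x,r))r^{-n} - \mu(B(x,2r))(2r)^{-n}\big|$ by comparing each term to $\omega_n$ times a flat contribution; the error is governed by a quantity measuring how far $G\cap B(x,2r)$ deviates from an $n$-plane at scale $r$, something like a squared $\beta$-type coefficient $\beta(x,r)^2$ or a Jones-type $\beta_2$ number. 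The square-summability $\int_0^1 \beta(x,r)^2\,\frac{dr}{r}<\infty$ for a.e.\ $x$ is then the standard Carleson-type estimate available on Lipschitz graphs (or uniformly rectifiable sets), and this feeds directly into (b). One has to be a little careful about the lower density bound $\Theta_*^n(x,\mu)>0$ and about the absolute continuity of $\mu$ with respect to $\HH^n|_E$ so that Lebesgue points of $g$ behave well; but these are quantitative book-keeping issues.

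The implication $(b)\Rightarrow(c)$ is essentially immediate: if $h(x,r)$ denotes the density difference at scale $r$, then $\int_0^1 |h(x,r)|^2\,\frac{dr}{r}<\infty$ forces $\liminf_{r\to0}|h(x,r)|=0$ along a sequence; to upgrade this to an actual limit one uses the telescoping identity
$$
\frac{\mu(B(x,2^{-j}))}{(2^{-j})^n} - \frac{\mu(B(x,2^{-k}))}{(2^{-k})^n}
= \sum_{i=j}^{k-1} h(x,2^{-i})
$$
together with the fact that the tail $\sum_{i\ge j} h(x,2^{-i})$ is absolutely convergent (by Cauchy–Schwarz against the dyadic version of (b)), hence the dyadic density ratios form a Cauchy sequence and converge; a short interpolation argument between consecutive dyadic scales, using the upper density bound to control oscillation within a scale, gives the full limit. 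Thus under the hypothesis $0<\Theta_*^n\le\Theta^{n,*}<\infty$, condition (b) implies that $\Theta^n(x,\mu)$ exists and is positive $\mu$-a.e.

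Finally, $(c)\Rightarrow(a)$: condition (c) says the dyadic density ratios are "slowly varying", and combined with the assumption $0<\Theta_*^n(x,\mu)\le\Theta^{n,*}(x,\mu)<\infty$ one again argues that the density $\Theta^n(x,\mu)$ must actually exist and be finite and positive $\mu$-a.e.\ — the point being that if $\limsup$ and $\liminf$ of the ratio differed, one could find two subsequences of dyadic scales realizing them, and passing from one to the other would require the increments $h(x,2^{-i})$ to sum to a nonzero amount infinitely often, contradicting (c) (here the boundedness of the density is what prevents pathologies). Once $\Theta^n(x,\mu)$ is shown to exist and be positive a.e., Preiss' theorem immediately yields that $\mu$ is $n$-rectifiable. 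The main obstacle in the whole argument is $(a)\Rightarrow(b)$: establishing the quantitative rate of convergence of the density ratio on rectifiable sets and tying the error to a Carleson-summable $\beta$-coefficient is where the real analysis lies; the other two implications are soft consequences of summability and the density hypotheses.
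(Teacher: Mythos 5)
There are genuine gaps in all three implications, and two of them are fatal.

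The most serious problem is your treatment of (b)$\Rightarrow$(c) and (c)$\Rightarrow$(a), both of which rest on the claim that the density $\Theta^n(x,\mu)$ can be shown to exist by soft summation arguments. Writing $h_i=\Delta_\mu(x,2^{-i})$, condition (b) gives $\sum_i h_i^2<\infty$, which does \emph{not} imply $\sum_i|h_i|<\infty$ (Cauchy--Schwarz over a block of length $k-j$ produces a factor $(k-j)^{1/2}$, and $\ell^2\not\subset\ell^1$), so the telescoped partial sums $\mu(B(x,2^{-k}))(2^{-k})^{-n}$ need not be Cauchy. Likewise, condition (c) only says $h_i\to0$, i.e.\ consecutive dyadic density ratios differ by $o(1)$; a bounded sequence $a_k$ with $a_{k+1}-a_k\to0$ can still oscillate forever between distinct $\liminf$ and $\limsup$ (e.g.\ $a_k=\sin\sqrt{k}$), so your "two subsequences" argument does not produce a contradiction. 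This is exactly the point the paper emphasizes: (c) is genuinely weaker than the existence of the limit \rf{teopreiss}, so one cannot reduce to Preiss' theorem by first establishing that the density exists. The paper instead smooths $\Delta_\mu$ to $\Delta_{\mu,\vphi}$, passes to tangent measures, shows they are $n$-uniform via the characterization in \cite{CGLT}, invokes Kirchheim--Preiss to find one flat tangent measure, and then uses Preiss' connectivity argument (uniform everywhere plus flat at one point implies flat) together with the Marstrand--Mattila criterion. None of that machinery is dispensable, and your proposal replaces it with an argument that is simply false.

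For (a)$\Rightarrow$(b) the direction of your thinking is closer to the truth, but two steps do not survive scrutiny. First, $|\Delta_\mu(x,r)|$ is \emph{not} controlled by a $\beta$-type coefficient of the support: take $\mu=g\,\HH^n|_L$ with $L$ an $n$-plane, so all $\beta$'s vanish, yet $\Delta_\mu(x,r)$ can be as large as the oscillation of $g$ allows. The density part of the measure must be handled by a separate mechanism (in the paper, a martingale decomposition of $f$ plus the Carleson estimate of Theorem \ref{teocglt} and the Carleson embedding theorem). Second, the reduction "modulo sets of arbitrarily small measure to $\mu=\HH^n|_G$" is not legitimate for a square function: $\Delta_\mu(x,r)$ sees the whole measure in $B(x,2r)$ at every scale, and the contribution of the discarded part of $\mu$ accumulates over scales. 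The correct way to localize is precisely the weak-type bound the paper proves: $T$ maps all of $M(\R^d)$ into $L^{1,\infty}(\HH^n|_\Gamma)$, established via a Calder\'on--Zygmund decomposition of the input measure $\nu=\mu$ relative to $\HH^n|_\Gamma$. Without such an estimate (or some substitute for it), your reduction to the model case is unjustified.
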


The following is an immediate consequence.

\begin{coro}\label{coromain}
Let $E\subset\R^d$ be a Borel set with $\HH^n(E)<\infty$ such that
$\Theta_*^n(x,E)>0$
for $\HH^n$-a.e.\ $x\in E$. The following are equivalent:
\vspace{1mm}
\begin{itemize}
\item[(i)] $E$ is $n$-rectifiable.
\vspace{3mm}
\item[(ii)] 
$\displaystyle
\int_0^1 \left|\frac{\HH^n(B(x,r)\cap E)}{r^n} - \frac{\HH^n(B(x,2r)\cap E)}{(2r)^n}\right|^2\,\frac{dr}r< \infty
\;$ for $\HH^n$-a.e.\ $x\in E$.

\vspace{3mm}
\item[(iii)] 
$\displaystyle
\lim_{r\to0}\left(\frac{\HH^n(B(x,r)\cap E)}{r^n} - \frac{\HH^n(B(x,2r)\cap E)}{(2r)^n}\right)
 =0
\;$ for $\HH^n$-a.e.\ $x\in E$.
\end{itemize}
\end{coro}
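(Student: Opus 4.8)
The plan is to deduce Corollary \ref{coromain} directly from Theorem \ref{teomain} by taking $\mu = \HH^n|_E$. First I would observe that, since $E$ is Borel with $\HH^n(E)<\infty$, the measure $\mu = \HH^n|_E$ is a finite Borel measure on $\R^d$, hence a Radon measure; moreover $\mu$ is concentrated on $E$, so that ``$\mu$-a.e.\ $x\in\R^d$'' means the same as ``$\HH^n$-a.e.\ $x\in E$''. Since $\mu(B(x,r)) = \HH^n(B(x,r)\cap E)$ for all $x$ and $r$, the density ratios and the integrand appearing in statements (ii) and (iii) of the corollary are literally those in (b) and (c) of the theorem, and likewise $\Theta^n_*(x,\mu)=\Theta^n_*(x,E)$ and $\Theta^{n,*}(x,\mu)=\Theta^{n,*}(x,E)$.

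Next I would check that $\mu$ satisfies the density hypothesis of Theorem \ref{teomain}. The lower bound $\Theta^n_*(x,\mu)>0$ for $\mu$-a.e.\ $x$ is exactly the assumption $\Theta^n_*(x,E)>0$ for $\HH^n$-a.e.\ $x\in E$. For the upper bound I would invoke the classical density estimate for sets of finite Hausdorff measure (see e.g.\ \cite{Mattila-llibre}): if $\HH^n(E)<\infty$ then $\Theta^{n,*}(x,E)\le 1$ for $\HH^n$-a.e.\ $x\in E$, so in particular $\Theta^{n,*}(x,\mu)<\infty$ for $\mu$-a.e.\ $x$. Hence $0<\Theta^n_*(x,\mu)\le\Theta^{n,*}(x,\mu)<\infty$ $\mu$-a.e., and Theorem \ref{teomain} is applicable to $\mu$.

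Finally I would translate the conclusion. By the identifications above, conditions (b) and (c) of Theorem \ref{teomain} become verbatim conditions (ii) and (iii) of the corollary, so it only remains to see that condition (a), namely that $\mu=\HH^n|_E$ be $n$-rectifiable, is equivalent to (i), that $E$ be $n$-rectifiable. If $E$ is $n$-rectifiable, then $\mu$ vanishes outside the rectifiable set $E$ and is (trivially) absolutely continuous with respect to $\HH^n|_E$, so $\mu$ is $n$-rectifiable in the sense of Mattila's definition. Conversely, if $\mu$ is $n$-rectifiable, there is a rectifiable set $F$ with $\mu(\R^d\setminus F)=0$, that is $\HH^n(E\setminus F)=0$; then $E$ coincides, up to an $\HH^n$-null set, with the rectifiable set $E\cap F$, and therefore $E$ is $n$-rectifiable. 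This yields the equivalence of (i), (ii), (iii).

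The deduction is essentially formal: the only ingredients beyond Theorem \ref{teomain} are the standard a.e.\ upper density bound $\Theta^{n,*}(x,E)\le 1$ for $\HH^n(E)<\infty$ and the routine dictionary between $n$-rectifiability of the measure $\HH^n|_E$ and $n$-rectifiability of the set $E$. Accordingly I do not anticipate any real obstacle; the one point requiring a little care is to keep track of the different ``almost everywhere'' qualifiers (with respect to $\mu$ versus $\HH^n|_E$), which agree precisely because $\mu$ is concentrated on $E$.
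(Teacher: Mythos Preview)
Your proposal is correct and follows essentially the same approach as the paper: the authors state the corollary as an ``immediate consequence'' of Theorem \ref{teomain} applied to $\mu=\HH^n|_E$, and the only point they single out is precisely the upper density bound $\Theta^{n,*}(x,E)<\infty$ for $\HH^n$-a.e.\ $x\in E$ when $\HH^n(E)<\infty$ (citing \cite[Theorem 6.2]{Mattila-llibre}). Your write-up simply makes the routine identifications explicit.
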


Notice that the fact that 
$\HH^n(E)<\infty$ implies that the condition $\Theta^{n,*}(x,E)<\infty$ is satisfied 
for $\HH^n$-a.e.\ $x\in E$  
(see \cite[Theorem 6.2]{Mattila-llibre}, for example).

Some remarks are in order. First we mention that the equivalence (a)$\Leftrightarrow$(b) in Theorem \ref{teomain} is a pointwise version
of a related result which characterizes the so called uniform rectifiability, which
was recently obtained in \cite{CGLT}.  The implication 
whose proof requires more effort in this paper is (a)$\Rightarrow$(b). To prove this we will introduce a square function operator and we
will show that it is bounded from the space of finite real measures on $\R^d$ to 
$L^{1,\infty}(\mu)$, by using Calder\'on-Zygmund techniques. 
On the other hand, we will obtain (b)$\Rightarrow$(a) by combining some of the results from \cite{CGLT} with others from Preiss regarding uniform measures, by using ``tangent measure technology''. The implication (c)$\Rightarrow$(a) follows by similar arguments. Notice,
by the way, that the statement in (c) looks much weaker than the $\mu$-a.e.\ existence of the
limit \rf{teopreiss}. So
(c)$\Rightarrow$(a)
can be considered as a strengthening of Preiss' theorem.

The implication (c)$\Rightarrow$(a) in Theorem \ref{teomain} does not hold if 
one replaces the assumption that $ \Theta_*^n(x,\mu)>0$ by $\Theta^{n,*}(x,\mu)>0$ $\mu$-a.e.
Indeed, Preiss has constructed in \cite[5.8-5.9]{Preiss} a measure $\mu$ in the plane 
which is purely $1$-unrectifiable (i.e.\ vanishes on any $1$-rectifiable set and so (a) 
fails for $\mu$)
such that,
for $\mu$-a.e.\ $x\in \R^2$, 
 $0< \Theta^{1,*}(x,\mu)<\infty$ and all tangent
measures at $x$ are $1$-flat (see Section \ref{sec6} for the definition of tangent and flat measures). It is easily seen that this fact implies that $\mu$ satisfies (c).

On the other hand, we do not know if the assumption that $0<\Theta^{n,*}(x,\mu)<\infty$ $\mu$-a.e.
suffices for the validity of the implication (b)$\Rightarrow$(a) in
Theorem \ref{teomain}. If this were true, then Corollary \ref{coromain}
would assert that, given $E\subset\R^d$ with $\HH^n(E)<\infty$, 
$E$ being $n$-rectifiable is equivalent to (ii).

Finally it is worth pointing that there are other characterizations of $n$-rectifiability involving square functions in the literature. Most of them involve the so called $\beta$ coefficients of Peter Jones
\cite{Jones}. See for example Pajot's theorem \cite[Theorem 26]{Pajot} for a result in the
spirit of Theorem  \ref{teomain}, with $\beta$ coefficients instead of densities.

In this paper the letter $c$ stands
for some constant which may change its value at different
occurrences. The notation $A\lesssim B$ means that
there is some fixed constant $c$ such that $A\leq c\,B$,
with $c$ as above. Also, $A\approx B$ is equivalent to $A\lesssim B\lesssim A$.


\section{Preliminaries}\label{secprelim}

\subsection{AD-regularity and uniformly rectifiable measures}

A measure $\mu$ is called $n$-AD-regular if there exists some
constant $c_0>0$ such that
$$c_0^{-1}r^n\leq \mu(B(x,r))\leq c_0\,r^n\quad \mbox{ for all $x\in
\supp(\mu)$ and $0<r\leq \diam(\supp(\mu))$.}$$

A measure $\mu$ is  uniformly  $n$-rectifiable if it is 
$n$-AD-regular and
there exist $\theta, M >0$ such that for all $x \in \supp(\mu)$ and all $r>0$ 
there is a Lipschitz mapping $g$ from the ball $B_n(0,r)$ in $\R^{n}$ to $\R^d$ with $\text{Lip}(g) \leq M$ such that$$
\mu (B(x,r)\cap g(B_{n}(0,r)))\geq \theta r^{n}.$$
In the case $n=1$, $\mu$ is uniformly $1$-rectifiable if and only if $\supp(\mu)$ is contained in a rectifiable curve $\Gamma$ in $\R^d$ such that the arc length measure on $\Gamma$ is $1$-AD-regular.
A set $E\subset\R^d$ is called uniformly $n$-rectifiable if $\HH^n|_E$ is uniformly  $n$-rectifiable.

The notion of uniform rectifiability was introduced by David and Semmes \cite{DS1}, \cite{DS2}.
In these works they showed that a big class of singular singular integrals with odd kernel
is bounded in $L^2(\mu)$ if $\mu$ is uniformly rectifiable. See \cite{ntov} for a recent related result
in the converse direction involving the $n$-dimensional Riesz transforms.

\subsection{Dyadic cubes}

In the case when $\mu$ is an $n$-AD-regular measure in $\R^d$ we will use the David lattice $\DD$ of ``cubes'' associated with $\mu$ (see \cite[Appendix 1]{David}, for example). 
Suppose for simplicity that $\mu(\R^d)=\infty$ (and so $\diam(\supp\mu)=\infty$).
Then one has a disjoint union
$\DD=\bigcup_{j\in\Z}\DD_j$ and each set $Q\in\DD_j$, which is called a cube, is a Borel subset of
$\supp(\mu)$ which satisfies 
$\mu(Q)\approx 2^{-jn}$ and $\diam(Q)\approx2^{-j}$. In fact, we will assume that
$$c^{-1}2^{-j}\leq \diam(Q)\leq 2^{-j}.$$
We set $\ell(Q):=2^{-j}$, and we call this the side length of $Q$. 
For $R \in \DD$, we denote by $\DD(R)$ the family of all cubes $Q \in \DD$ which are contained in $R$. In the case when $\mu(\R^d)<\infty$ and $\diam(\supp(\mu))\approx2^{-j_0}$, then $\DD=\bigcup_{j\geq j_0}
\DD_j$. The other properties of the lattice $\DD$ are the same as in the previous case.

If $Q\in\DD_j$, we write $J(Q)=j$. That is, $J(Q)$ is the generation of $\DD$ to which $Q$ belongs.
On the other hand, we say that $Q,Q'\in\DD$ are neighbors if they belong to the same generation 
(which is equivalent to saying $\ell(Q)=\ell(Q')$) and moreover there exist $x\in Q$ and $x'\in Q'$
such that $|x-x'|\leq \ell(Q)$. We denote the collection of neighbors of $Q$ by $\NN(Q)$. Notice that
$Q\in\NN(Q)$.

For $Q\in\DD$, we denote by $\CH(Q)$ the family of dyadic cubes contained in $R$ with side length
equal to $\ell(Q)/2$. These are the so called children of $Q$.

We will call ``true cubes'' the usual cubes in $\R^d$, to distinguish them from the cubes from $\DD$.

\subsection{The dyadic martingale}

Suppose again that $\mu$ is $n$-AD-regular, and let $\DD$ be the associated dyadic lattice. 
Given $f\in L^1_{loc}(\mu)$ and $Q\in\DD$, we denote  by $m_Qf$ the mean of $f$ on $Q$ with respect to $\mu$. That is,
$$m_Qf = \frac1{\mu(Q)}\int_Q f\,d\mu.$$
 Then we define
$$\Delta_Q f = \sum_{P\in\CH(Q)} \chi_P\,(m_Pf - m_Q f).$$ 
The functions $\Delta_Qf$, $Q\in\DD$, are orthogonal, and it is easy to check that
$$\|f\|_{L^2(\mu)}^2 = \sum_{Q\in\DD}\|\Delta_Q f\|_{L^2(\mu)}^2.$$
For every $Q\in\DD$, we also have
\begin{equation}
 f\,\chi_Q=  (m_Qf)\,\chi_Q + \sum_{P\in\DD(Q)}\Delta_P f,
\label{eqmartingale}
\end{equation}
with the sum converging in $L^2(\mu)$.


\section{The square function operator}\label{secqu}

To prove
 Theorem \ref{teomain} we will first show (a)$\Rightarrow$(b).
 To this end, it is convenient to introduce the
following operator $T$.
Given a real Radon measure $\nu$ on $\R^d$ and $x\in\R^d$, we set
$$T\nu(x) =
\left(\int_0^\infty \left|\frac{\nu(B(x,r))}{r^n} - \frac{\nu(B(x,2r))}{(2r)^n}\right|^2\,\frac{dr}r\right)^{1/2}.$$
Notice that $T$ is a sublinear operator.
For a positive Borel measure $\mu$ on $\R^d$ and a given function $f\in L^1_{loc}(\mu)$, 
we also write 
$$T_\mu f(x) = T(f\mu)(x).$$
That is,
$$T_\mu f(x) = \left(\int_0^\infty \left|\frac{(f\mu)(B(x,r))}{r^n} - \frac{
(f\mu)(B(x,2r))}{(2r)^n}\right|^2\,\frac{dr}r\right)^{1/2},$$
where
$(f\mu)(A) = \int_Af\,d\mu$ for any set $A\subset \R^d$.

It is easy to check that 
to prove that $T\mu(x)<\infty$ for $\mu$-a.e.\ $x\in\R^d$ if $\mu$ is $n$-rectifiable
we may assume that $\mu$ is compactly supported and thus finite.
We will show that if  
$\Gamma$ is an $n$-dimensional Lipschitz graph in $\R^d$ (and more generally, a
uniformly $n$-rectifiable set), then
$T$ is bounded from the space of finite real Borel measures on $\R^d$, denoted by $M(\R^d)$, to
$L^{1,\infty}(\HH^n|_\Gamma)$. That is, there exists some constant $c>0$ such that 
\begin{equation}\label{eqdeb0}
\HH^n\bigl(\{x\in\Gamma: \,T\nu(x)>\lambda\}\bigr) \leq c\,\frac{\|\nu\|}\lambda\quad
\mbox{\;for
all $\nu\in M(\R^d)$ and  $\lambda>0$.}
\end{equation}

To see that $T\mu(x)<\infty$ for $\mu$-a.e.\ $x\in\R^d$, notice that there exists 
a countable union of possibly rotated $n$-dimensional Lipschitz graphs $\Gamma_i$ such that $\mu$
is absolutely continuous with respect to $\HH^n|_{\bigcup_i\Gamma_i}$ (indeed, in the definition \rf{eq001} of $n$-rectifiable sets, one can replace the sets $f_i(\R^n)$ by 
possibly rotated $n$-dimensional  Lipscthiz graphs $\Gamma_i$).
 Thus, it suffices to show that
$T\mu(x)<\infty$ for $\HH^n$-a.e.\ $x\in\Gamma_i$. This is an immediate
consequence of the estimate \rf{eqdeb0} applied to the particular case $\Gamma=\Gamma_i$, $\nu=\mu$, recalling that we assume $\|\mu\|<\infty$.

The first step to prove the key estimate \rf{eqdeb0} consists in showing that
$T_\mu$ is bounded in $L^2(\mu)$ if $\mu$ is a uniformly $n$-rectifiable measure. This is shown in the next section. Later, by means of a suitable Calder\'on-Zygmund decomposition we will prove that 
$T$ is bounded from $M(\R^d)$ to $L^{1,\infty}(\mu)$, which in particular yields the estimate
\rf{eqdeb0} just by choosing $\mu=\HH^n|_\Gamma$.


\section{$T_\mu$ is bounded in $L^2(\mu)$ if $\mu$ is uniformly rectifiable}

The objective of this section consists in proving the following.

\begin{theorem}\label{teol2}
Let $\mu$ be a uniformly $n$-rectifiable measure in $\R^d$.
Then $T_\mu$ is bounded in $L^2(\mu)$.
\end{theorem}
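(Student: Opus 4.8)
The plan is to reduce the $L^2(\mu)$ boundedness of $T_\mu$ to a standard Calder\'on--Zygmund estimate on a space of homogeneous type, using the dyadic lattice $\DD$ and the martingale decomposition from Section~\ref{secprelim}. First I would observe that the kernel defining $T$ is, for each fixed $r$, essentially an averaging operator: the quantity $\frac{\nu(B(x,r))}{r^n}-\frac{\nu(B(x,2r))}{(2r)^n}$ can be written as $\int k_r(x,y)\,d\nu(y)$ with $k_r(x,y)=r^{-n}\chi_{B(0,r)}(x-y)-(2r)^{-n}\chi_{B(0,2r)}(x-y)$, a kernel of mean zero in $y$, supported in $|x-y|\le 2r$, and of size $\lesssim r^{-n}$. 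Thus $T_\mu$ has the structure of a square-function (Littlewood--Paley type) operator with vector-valued kernel $K(x,y)=(k_r(x,y))_{r>0}\in L^2(dr/r)$; one checks $\|K(x,\cdot)\|_{L^2(\mu,\,L^2(dr/r))}\lesssim 1$ uniformly (AD-regularity), and the standard Hörmander-type smoothness estimate $\int_{|x-x'|\le R}\|K(x,y)-K(x',y)\|_{L^2(dr/r)}\,d\mu(y)\lesssim 1$ for $|y-x|\ge 2R$ holds because $k_r(x,y)-k_r(x',y)$ is nonzero only on an annulus of $\mu$-measure $\lesssim R\,r^{n-1}$ near $|x-y|\approx r$, which after integrating in $dr/r$ contributes a bounded amount. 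So by the vector-valued Calder\'on--Zygmund theorem it suffices to prove the single $L^2(\mu)$ bound $\|T_\mu\|_{L^2(\mu)\to L^2(\mu)}\lesssim 1$ for \emph{some} nice test input, or directly to establish the diagonal $L^2$ estimate via a good-$\lambda$ / $TB$-type argument.

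The cleanest route, and the one I would actually carry out, is to prove $\|T_\mu f\|_{L^2(\mu)}\lesssim\|f\|_{L^2(\mu)}$ directly by discretizing the $r$-integral over dyadic scales and comparing the ball-average operators to the martingale operators $\Delta_Q$. Writing $A_r f(x)=\frac{(f\mu)(B(x,r))}{r^n}$, we have $T_\mu f(x)^2\approx\sum_{j}|A_{2^{-j}}f(x)-A_{2^{-j+1}}f(x)|^2$ up to harmless errors from the continuous average over a dyadic block of scales. For uniformly rectifiable $\mu$ one has the key geometric input that $\mu$ is very flat at most scales and locations: by the corona / big-pieces-of-Lipschitz-graphs decomposition for uniformly rectifiable measures, $\supp\mu$ is, outside a Carleson-sparse family of bad cubes, well approximated by an $n$-plane at the scale $\ell(Q)$, and on each such plane the average $\frac{\HH^n(B(x,r)\cap\Pi)}{r^n}$ is exactly the constant $\omega_n$ (volume of the unit ball in $\R^n$), independent of $r$. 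Hence on good cubes the difference $A_{2^{-j}}1-A_{2^{-j+1}}1$ is $o(1)$, and more quantitatively $|A_{2^{-j}}1(x)-A_{2^{-j+1}}1(x)|\lesssim \beta(x,2^{-j})$ for a suitable $L^2$-type flatness coefficient satisfying a Carleson packing condition; this controls $\|T_\mu 1\|_{L^2}$. For general $f$ one combines this with the martingale estimate: write $f\chi_Q=(m_Qf)\chi_Q+\sum_{P\in\DD(Q)}\Delta_P f$ as in \eqref{eqmartingale}, note $A_r$ applied to the oscillatory part $\sum\Delta_P f$ at scale $r\approx\ell(P)$ contributes a telescoping/almost-orthogonal sum handled by the standard inequality $\sum_Q\|\Delta_Q f\|_{L^2(\mu)}^2=\|f\|_{L^2(\mu)}^2$, while the constant part $m_Qf$ is governed by the $T_\mu 1$ estimate above. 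Summing the Carleson packing for the bad cubes and the almost-orthogonality for the martingale pieces yields the claim.

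The main obstacle, and where the real work lies, is the geometric estimate controlling $T_\mu 1$ on uniformly rectifiable measures: one must show $\int_{B(x_0,R)}\!\int_0^R\big|A_r1(x)-A_{2r}1(x)\big|^2\frac{dr}{r}\,d\mu(x)\lesssim R^n$. This is precisely a Carleson-measure statement for the ``density oscillation'' $|A_r1(x)-A_{2r}1(x)|^2\frac{d\mu(x)\,dr}{r}$, and proving it requires quantifying how closely $\mu\rest B(x,r)$ resembles flat Lebesgue measure on an $n$-plane through $x$. I expect to get this by invoking the known equivalence of uniform rectifiability with the Carleson condition on the Jones-type $\beta_{\mu,2}$ numbers (or the bilateral $\beta$ numbers), plus an elementary estimate showing $|A_r1(x)-A_{2r}1(x)|\lesssim\beta_{\mu,2}(x,Cr)+(\text{AD-regularity fluctuation terms})$, the latter also being Carleson. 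Assembling these, together with the routine CZ/martingale machinery, gives boundedness of $T_\mu$ on $L^2(\mu)$; the subtle point throughout is to keep the error terms in a genuine Carleson-packing form so that everything sums to $O(R^n)$.
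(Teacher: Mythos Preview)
Your second and third paragraphs outline essentially the paper's strategy: discretize the $r$-integral dyadically, localize to cubes $Q\in\DD$, expand $f$ on the relevant neighborhood via the martingale identity \eqref{eqmartingale}, and control the ``constant'' piece $m_Qf\cdot T_Q\chi$ by Carleson embedding once one knows that $\sum_{Q\in\DD(S)}\|T_Q 1\|_{L^2(\mu)}^2\lesssim\mu(S)$. The paper, however, does \emph{not} prove this last Carleson-measure estimate; it quotes it from \cite{CGLT} as Theorem~\ref{teocglt} and uses it as a black box. The paper then spends the remaining work on the oscillatory piece $\sum_P\Delta_Pf$ (a cancellation/Cauchy--Schwarz argument exploiting that $\Delta_Pf$ has mean zero, so only cubes $P$ straddling $\partial B(x,r)$ contribute) and on an intermediate ``neighbor-fluctuation'' term $\sum_{R\in\NN(Q)}|m_Rf-m_Qf|$, handled by citing \cite{Mas-Tolsa}. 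You gloss over both of these; the $C_Qf$ estimate in particular is where most of the paper's actual computation lies.

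Where your proposal does diverge is in offering a route to the Carleson estimate itself, via a pointwise bound $|A_r1(x)-A_{2r}1(x)|\lesssim\beta_{\mu,2}(x,Cr)+(\text{fluctuation terms})$. The pointwise bound against $\beta_{\mu,2}$ cannot be correct as stated: take $\mu=g\,\HH^n|_L$ with $L$ an $n$-plane and $g$ bounded above and below but oscillating; then $\supp\mu=L$ so $\beta_{\mu,2}\equiv 0$, yet $A_r1(x)-A_{2r}1(x)$ is a nontrivial average-difference of $g$. The Carleson estimate of Theorem~\ref{teocglt} still holds for such $\mu$ (it reduces to a Littlewood--Paley estimate for $g$), but all of the content has migrated into your unspecified ``AD-regularity fluctuation terms,'' and it is not clear what Carleson-packing mechanism you have in mind for them. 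So either you must replace $\beta_{\mu,2}$ by a coefficient that sees density (e.g.\ Tolsa's $\alpha$-numbers, which do satisfy a Carleson condition on uniformly rectifiable sets) or you must in effect reprove the main theorem of \cite{CGLT}; the paper wisely avoids this by citing it.

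Finally, the vector-valued Calder\'on--Zygmund discussion in your first paragraph is a detour: the H\"ormander estimate you sketch (which does hold, with $\|K(x,y)-K(x',y)\|_{L^2(dr/r)}\lesssim|x-x'|^{1/2}|x-y|^{-n-1/2}$) lets you pass from $L^2$ boundedness to weak $(1,1)$, not the other way around; indeed this is essentially how the paper handles Theorem~\ref{teol1} in the next section. It contributes nothing to the $L^2$ bound you are asked to prove here.
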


The following theorem, which is one main results from \cite{CGLT},  will be a fundamental ingredient of the proof of Theorem \ref{teol2}.

\begin{theorem}\label{teocglt}
Let $\mu$ be a uniformly $n$-rectifiable measure in $\R^d$. Then
there exists a constant $c$ such that for any ball $B\subset\R^d$ with radius $R$,
$$\int_0^R\int_{x\in B} \left|\frac{\mu(B(x,r))}{r^n} - \frac{\mu(B(x,2r))}{(2r)^n}\right|^2\,d\mu(x)\,\frac{dr}r\leq c\,R^n,$$
with $c$ depending only on $n,d$ and the constants involved in the AD-regular and
uniformly $n$-rectifiable character of $\mu$.
\end{theorem}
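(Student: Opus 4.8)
The plan is to prove this Carleson estimate the way such estimates are usually obtained for uniformly rectifiable measures: run a corona (stopping-time) decomposition of $\supp\mu$, reduce to a model situation in which $\supp\mu$ is a very flat Lipschitz graph, and there reduce to a one-line Littlewood--Paley inequality. Write $D_\mu(x,r)=\mu(B(x,r))/r^n$, so the integrand equals $|D_\mu(x,r)-D_\mu(x,2r)|^2$; $n$-AD-regularity already gives the crude pointwise bound $|D_\mu(x,r)-D_\mu(x,2r)|\lesssim1$, which will absorb all ``transitional'' scales in the decomposition at the cost of a harmless constant. By the David--Semmes characterization of uniform rectifiability I would first fix a small dimensional parameter $\varepsilon$ and decompose the David lattice $\DD$ into stopping-time regions (trees) $\{S\}$ whose top cubes $Q(S)$ obey the Carleson packing bound $\sum_S\mu(Q(S))\lesssim\mu(R_0)$ for every $R_0\in\DD$, and such that on each tree $\supp\mu$ lies in an $\varepsilon$-thin tube around the graph $\Gamma_S$ of a Lipschitz function $A_S$ with $\mathrm{Lip}(A_S)\le\varepsilon$. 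Splitting the $r$-integral according to which tree carries the cube of size $\approx r$ around $x$, and handling the scales near stopping cubes and tree tops by the pointwise bound summed over the Carleson family of transitional cubes, the task reduces to bounding the ``interior'' contribution of each tree $S$ by $\lesssim\mu(Q(S))$ and summing.

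The heart of the argument — and the step I expect to be the main obstacle — is the flattening inside a single tree $S$. Let $\Pi$ be the $(1+C\varepsilon)$-bi-Lipschitz map associated with $A_S$ that straightens $\Gamma_S$ to an $n$-plane $L$, push the relevant part of $\mu$ forward to an $n$-AD-regular measure $\nu$ supported near $L$, and observe that an AD-regular measure on a plane is necessarily $w\,\HH^n|_L$ with $c^{-1}\le w\le c$ a.e. One must then show that replacing $D_\mu(x,r)$ by $D_\nu(\Pi(x),r)$, and $\nu$ by the ``vertical average'' $w\,\HH^n|_L$, introduces errors whose $L^2(d\mu\times\frac{dr}{r})$-mass over the interior of $S$ is controlled by $\sum_{Q\in S}\beta_{A_S}(Q)^2\,\mu(Q)$, where $\beta_{A_S}(Q)$ is the scale-$\ell(Q)$ Dorronsoro coefficient of $A_S$ near $Q$ (normalized $L^2$-distance of $A_S$ to affine functions, plus the oscillation of $\nabla A_S$). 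The subtlety is that metric balls do not pull back to metric balls and that $D_\mu$ is built from the indicator of a ball rather than a smooth bump, so these error bounds have to be done after averaging $r$ over dyadic windows: a naive pointwise comparison of $D_\mu(x,r)-D_\mu(x,2r)$ with a single $\beta$ loses a square root (as a sandwiching argument shows) and would be useless for a quadratic Carleson bound, whereas the $L^2$-averaged comparison, exploiting that $r\mapsto\mu(B(x,r))$ is monotone so the total ``jump mass'' over $r\in[\ell,2\ell]$ is $\lesssim\ell^n$, does not.

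Once the flattening is in place, the model case is essentially free: for $\nu=w\,\HH^n|_L$ with $L\cong\R^n$ one has $D_\nu(y,r)-D_\nu(y,2r)=\omega_n(\langle w\rangle_{B(y,r)}-\langle w\rangle_{B(y,2r)})$, and since $\langle w\rangle_{B(y,r)}=w*\psi_r$ with $\psi=\omega_n^{-1}\chi_{B(0,1)}$ and $\psi_r-\psi_{2r}$ of mean zero, Plancherel gives the Littlewood--Paley bound
$$\int_0^{R}\int_{B}\bigl|\langle w\rangle_{B(y,r)}-\langle w\rangle_{B(y,2r)}\bigr|^2\,dy\,\frac{dr}r\;\lesssim\;\|w\|_{L^\infty}^2\,R^n.$$
Combining this with the error analysis of the flattening, the interior contribution of a tree $S$ is $\lesssim\mu(Q(S))+\sum_{Q\in S}\beta_{A_S}(Q)^2\mu(Q)$, and $\sum_{Q\in S}\beta_{A_S}(Q)^2\mu(Q)\lesssim\|\nabla A_S\|_{L^2}^2\lesssim\varepsilon^2\ell(Q(S))^n\lesssim\mu(Q(S))$ by Dorronsoro's theorem. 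Summing over $S$ with the Carleson packing condition yields $\int_0^R\int_B|D_\mu(x,r)-D_\mu(x,2r)|^2\,d\mu(x)\,\frac{dr}r\lesssim\mu(R_0)\lesssim R^n$ for every ball $B$ of radius $R$, as claimed.

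As an alternative that sidesteps the explicit flattening, one could smooth the density to $\widetilde D_\mu(x,r)=(\varphi_r*\mu)(x)$ for a fixed bump $\varphi$ normalized so that $\varphi_r*(\HH^n|_L)$ is constant on every $n$-plane $L$: since $\varphi_r$ is Lipschitz it pairs directly with Tolsa's Wasserstein-type $\alpha$-distance, so $|\widetilde D_\mu(x,r)-\widetilde D_\mu(x,2r)|$ is bounded by $\alpha_\mu$-coefficients at comparable scales \emph{without} loss, and these satisfy a quadratic Carleson estimate for uniformly rectifiable $\mu$; the gap between $\widetilde D_\mu$ and $D_\mu$ is then again absorbed by the weight-oscillation estimate of the previous paragraph. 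The residual difficulty of this route is the centering of the approximating plane relative to $x$, which would still require either an $\alpha$-number variant based at a point of $\supp\mu$ or an argument of the flattening type.
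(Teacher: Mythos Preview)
The paper does not prove Theorem~\ref{teocglt}. It is explicitly introduced as ``one of the main results from \cite{CGLT}'' and is used as a black box in the proof of Theorem~\ref{teol2}; no argument for it appears anywhere in the present paper. Hence there is no ``paper's own proof'' to compare your attempt against.

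That said, your outline is a plausible account of how such Carleson estimates are obtained in the uniform rectifiability literature, and is broadly in the spirit of \cite{CGLT}. Both routes you sketch --- corona decomposition plus flattening to a small-Lipschitz graph followed by a planar Littlewood--Paley estimate, and the alternative via the smoothed density $\widetilde D_\mu=\varphi_r*\mu$ and $\alpha$-type coefficients --- are viable strategies. Your identification of the main obstacle (the square-root loss in a naive pointwise comparison of $D_\mu$ across the flattening map, repaired by averaging in $r$) is accurate and is indeed where the real work lies. The model-case bound is fine once one localizes: since $\psi_r-\psi_{2r}$ is supported in $B(0,2r)$, for $r\le R$ and $y\in B$ only $w|_{3B}$ enters, so the global Plancherel bound applied to $w\chi_{3B}$ gives $\lesssim\|w\|_{L^\infty}^2 R^n$.

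What you have written is a plan, not a proof: the flattening error analysis is asserted (``one must then show\ldots'') but not carried out, and that step carries essentially all of the technical content. If you intend this as a self-contained argument rather than a reference to \cite{CGLT}, that gap would need to be filled.
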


We are ready now to prove Theorem \ref{teol2}.
The proof below is somewhat similar in spirit to some of the arguments in \cite{Mas-Tolsa}.

\vv
\begin{proof}[\bf Proof of Theorem \ref{teol2}]
For $k\in\Z$, $f\in L^2(\mu)$, $x\in\R^d$, we denote
$$T_{\mu,k} f(x) = \left(\int_{2^{-k-2}}^{2^{-k-1}} \left|\frac{(f\mu)(B(x,r))}{r^n} - \frac{
(f\mu)(B(x,2r))}{(2r)^n}\right|^2\,\frac{dr}r\right)^{1/2},$$
so that
$$T_\mu f(x)^2 = \sum_{k\in\Z}T_{\mu,k} f(x)^2.$$

Consider the family $\DD$ of dyadic cubes associated with $\mu$, as described in Section \ref{secprelim}, and write
$$\int |T_\mu f|^2\,d\mu = \sum_{k\in\Z}\int |T_{\mu,k} f|^2\,d\mu = \sum_{Q\in\DD} 
\int |T_Q f|^2\,d\mu,$$
where
$$T_Q f = \chi_Q T_{\mu,J(Q)}f.$$
Recall that $J(Q)$ stands for integer $k$ such that $Q\in\DD_k$.
Observe that if $x\in Q$ and $r\leq \ell(Q)/2$, then 
$$B(x,2r)\cap\supp(\mu)\subset \bigcup_{P\in\NN(Q)} P.$$
So denoting $\wt Q = \bigcup_{P\in\NN(Q)} P$ we deduce that
$$T_Q f = T_Q(\chi_{\wt Q} f).$$

By the martingale decomposition \rf{eqmartingale}, we have
\begin{align*}
\chi_{\wt Q} f & = \sum_{R\in\NN(Q)} \Bigl(\chi_R \, m_R f + \sum_{P\in\DD(R)}\Delta_P f\Bigr)\\
&= \chi_{\wt Q}\,m_Qf +  \sum_{R\in\NN(Q)} \chi_R \, (m_R f - m_Q f) +
\sum_{R\in\NN(Q)}  \sum_{P\in\DD(R)}\Delta_P f.
\end{align*}
Therefore,
\begin{align}\label{eqw10}
T_Q f &\leq |m_Q f|\, T_Q\chi_{\wt Q} + \sum_{R\in\NN(Q)} |m_R f - m_Qf| \,T_Q\chi_R+
\sum_{R\in\NN(Q)}T_Q\biggl(\sum_{P\in\DD(R)}\Delta_P f\biggr) \\
&=: A_Q f + B_Q f + C_Q f.\nonumber
\end{align}
So we have
\begin{equation}\label{eqw11}
\int |T_\mu f|^2\,d\mu \lesssim \sum_{Q\in\DD} 
\int |A_Q f|^2\,d\mu + \sum_{Q\in\DD}\int |B_Q f|^2\,d\mu + \sum_{Q\in\DD}\int |C_Q f|^2\,d\mu.
\end{equation}

To estimate the first sum on the right side of the preceding inequality notice that for any
$S\in\DD$
\begin{align*}
\sum_{Q\in \DD(S)} \int|T_Q \chi_{\wt Q}|^2\,d\mu &= \sum_{Q\in \DD(S)} \int|T_Q 1|^2\,d\mu\\
&= 
\int_0^{\ell(S)/2}\!\!\int_{x\in S} \left|\frac{\mu(B(x,r))}{r^n} - \frac{\mu(B(x,2r))}{(2r)^n}\right|^2\,d\mu(x)\,\frac{dr}r\leq c\,\mu(S),
\end{align*}
by Theorem \ref{teocglt}. Then, by the Carleson embedding theorem, we deduce that
$$\sum_{Q\in\DD} 
\int |A_Q f|^2\,d\mu = \sum_{Q\in\DD} |m_Qf|^2
\int |T_Q \chi_{\wt Q}|^2\,d\mu \leq c\,\|f\|_{L^2(\mu)}^2.$$

To estimate the second sum on the right side of \rf{eqw11} we denote
$$b_Q(f) = \sum_{R\in\NN(Q)}|m_Rf - m_Qf|,$$
and then we write
$$ B_Q f =
\sum_{R\in\NN(Q)} |m_R f - m_Qf| \,T_Q\chi_R \leq c\,b_Q(f)\,\chi_Q,$$
just using the trivial estimate $T_Q\chi_R\lesssim\chi_Q$. Thus
$$\sum_{Q\in\DD}\int |B_Q f|^2\,d\mu \leq c 
\sum_{Q\in\DD} b_Q(f)^2\,\mu(Q).$$
As shown in Proposition 5.9 of \cite{Mas-Tolsa}, the last sum is bounded by 
$c\,\|f\|_{L^2(\mu)}^2$. 

So it only remains to show that
\begin{equation}\label{eqww00}
\sum_{Q\in\DD}\int |C_Q f|^2\,d\mu\leq c\,\|f\|_{L^2(\mu)}^2.
\end{equation}
To this end we set
\begin{equation}\label{eqww01}
|C_Q f|^2 \lesssim \sum_{R\in\NN(Q)}\biggl|T_Q\biggl(\sum_{P\in\DD(R)}\Delta_P f\biggr)\biggr|^2,
\end{equation}
taking into account that number of neighbors of $Q$ is uniformly bounded.

For $x\in Q$ and $R\in\NN(Q)$ we have
\begin{multline}\label{eqww3}
T_Q\biggl(\sum_{P\in\DD(R)}\Delta_P f\biggr)(x)^2 \\
= \int_{\ell(Q)/4}^{\ell(Q)/2}
\left|\frac{\sum_{P\in\DD(R)}(\Delta_P f\,\mu)(B(x,r))}{r^n} - \frac{
\sum_{P\in\DD(R)}(\Delta_P f\,\mu)(B(x,2r))}{(2r)^n}\right|^2\,\frac{dr}r.
\end{multline}
Recall now that $\int \Delta_P f\,d\mu =0$, and so
$(\Delta_P f\,\mu)(B(x,r))=0$ unless both $P \cap B(x,r)\neq\varnothing$ and $P \cap B(x,r)^c\neq\varnothing$, and analogously replacing $r$ by $2r$. So if we denote
\begin{align*}
J_{R,r}(x) & = \bigl\{P\in\DD(R): P \cap B(x,r)\neq\varnothing\mbox{ and } P \cap B(x,r)^c\neq\varnothing\bigr\} \\
& \quad\cup \bigl\{P\in\DD(R): P \cap B(x,2r)\neq\varnothing\mbox{ and } P \cap B(x,2r)^c\neq\varnothing\bigr\},
\end{align*}
then we have
$$
\left|\frac{\sum_{P\in\DD(R)}(\Delta_P f\,\mu)(B(x,r))}{r^n} - \frac{
\sum_{P\in\DD(R)}(\Delta_P f\,\mu)(B(x,2r))}{(2r)^n}\right| \leq 
\frac c{\ell(Q)^n}\sum_{P\in J_{R,r}(x)} \|\Delta_P f\|_{L^1(\mu)}.$$
By Cauchy-Schwarz applied twice, the right hand side is bounded by
\begin{multline*}
\frac c{\ell(Q)^n}\Biggl(\sum_{P\in J_{R,r}(x)} \ell(P)^{n-1/2}\Biggr)^{1/2}
\Biggl(\sum_{P\in J_{R,r}(x)}\frac1{\ell(P)^{n-1/2}}\,
\|\Delta_P f\|_{L^1(\mu)}^2\Biggr)^{1/2}\\
\leq\frac c{\ell(Q)^n}\Biggl(\sum_{P\in J_{R,r}(x)} \ell(P)^{n-1/2}\Biggr)^{1/2}
\Biggl(\sum_{P\in \DD(R)}\ell(P)^{1/2}\,
\|\Delta_P f\|_{L^2(\mu)}^2\Biggr)^{1/2}.
\end{multline*}
Plugging this estimate into \rf{eqww3} we obtain
\begin{multline}  \label{eqww55}
T_Q\biggl(\sum_{P\in\DD(R)}\Delta_P f\biggr)(x)^2 \\
\leq
\frac c{\ell(Q)^{2n}}
\Biggl(\sum_{P\in \DD(R)}\ell(P)^{1/2}\,
\|\Delta_P f\|_{L^2(\mu)}^2\Biggr)
\int_{\ell(Q)/4}^{\ell(Q)/2} \sum_{P\in J_{R,r}(x)} \ell(P)^{n-1/2}\,\frac{dr}r.
\end{multline}
To estimate the last integral, notice if $P\in J_{R,r}(x)$, then either $\partial B(x,r)$ 
or $\partial B(x,2r)$ intersect the convex hull of $P$, which we denote by ${\rm conv}(P)$.
By Fubini then we get
\begin{multline*}
\int_{\ell(Q)/4}^{\ell(Q)/2} \sum_{P\in J_{R,r}(x)} \ell(P)^{n-1/2}\,\frac{dr}r\\
 \leq \frac c{\ell(Q)} \sum_{P\in\DD(R)} \ell(P)^{n-1/2}\,\,
\bigl|\bigl\{r>0: \bigl[(\partial B(x,r))\cup (\partial B(x,2r))\bigr]\cap {\rm conv}(P)\neq\varnothing\bigr\}\bigr|.\end{multline*}
Since $\diam(P)\approx\ell(P)$, for any fixed $x$ we have
\begin{equation}\label{tt1}
\bigl|\bigl\{r>0: (\partial B(x,r))\cap {\rm conv}(P)\neq\varnothing\bigr\}\bigr| \lesssim \ell(P),
\end{equation}
and analogously replacing $\partial B(x,r)$ by $\partial B(x,2r)$. So we obtain
$$\int_{\ell(Q)/4}^{\ell(Q)/2} \sum_{P\in J_{R,r}(x)} \ell(P)^{n-1/2}\,\frac{dr}r
\leq \frac c{\ell(Q)} \sum_{P\in\DD(R)} \ell(P)^{n+1/2} \leq c\,\frac {\ell(Q)^{n+1/2}}{\ell(Q)}
= c\,\ell(Q)^{n-1/2}.$$
Together with \rf{eqww55} this gives us that, for all $x\in\R^d$,
$$T_Q\biggl(\sum_{P\in\DD(R)}\Delta_P f\biggr)(x)^2
\leq
\frac c{\ell(Q)^{n+1/2}}
\sum_{P\in \DD(R)}\ell(P)^{1/2}\,
\|\Delta_P f\|_{L^2(\mu)}^2\,\chi_Q(x).
$$

From the last estimate and \rf{eqww01} we deduce that
\begin{align*}
\sum_{Q\in\DD}\int |C_Q f|^2\,d\mu & \leq c\sum_{Q\in\DD} \sum_{R\in\NN(Q)}\int
\biggl|T_Q\biggl(\sum_{P\in\DD(R)}\Delta_P f\biggr)\biggr|^2\,d\mu\\
& \leq c\sum_{Q\in\DD} \sum_{R\in\NN(Q)}\frac 1{\ell(Q)^{1/2}}
\sum_{P\in \DD(R)}\ell(P)^{1/2}\,
\|\Delta_P f\|_{L^2(\mu)}^2\\
& = c\sum_{P\in\DD} \|\Delta_P f\|_{L^2(\mu)}^2 \sum_{Q\in\DD}\sum_{R\in\NN(Q):R\supset P}\frac {\ell(P)^{1/2}}{\ell(Q)^{1/2}}.
\end{align*}
Since
$$\sum_{Q\in\DD}\sum_{R\in\NN(Q):R\supset P}\frac {\ell(P)^{1/2}}{\ell(Q)^{1/2}}\lesssim 1,$$
\rf{eqww00} follows, and the proof of the theorem is concluded.
\end{proof}


\section{$T$ is bounded from $M(\R^d)$ to $L^{1,\infty}(\mu)$ if $\mu$ is uniformly rectifiable}

In this section we will prove the following.

\begin{theorem}\label{teol1}
Let $\mu$ be a uniformly $n$-rectifiable measure in $\R^d$.
Then $T$ is bounded from $M(\R^d)$ to $L^{1,\infty}(\mu)$. That is, there exists some constant
$c$ such that 
\begin{equation}\label{eqdebbb}
\mu\bigl(\{x\in\R^d: \,T\nu(x)>\lambda\}\bigr) \leq c\,\frac{\|\nu\|}\lambda\quad
\mbox{\;for
all $\nu\in M(\R^d)$ and  $\lambda>0$.}
\end{equation}
\end{theorem}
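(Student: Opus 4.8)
The plan is to prove Theorem~\ref{teol1} by a Calder\'on--Zygmund decomposition argument, using the $L^2(\mu)$-boundedness of $T_\mu$ from Theorem~\ref{teol2} as the starting point. Fix $\nu\in M(\R^d)$ and $\lambda>0$. Since $\mu$ is $n$-AD-regular, we may assume $\nu$ is a positive measure (splitting into positive and negative parts, using sublinearity of $T$), and by standard reductions it suffices to bound $\mu(\{x: T\nu(x)>\lambda\})$ when $\nu$ is compactly supported. First I would perform a Calder\'on--Zygmund decomposition of $\nu$ at level $\lambda$ adapted to the dyadic lattice $\DD$ associated with $\mu$: one obtains a family of essentially disjoint cubes $\{Q_i\}\subset\DD$ (or true cubes, whichever is more convenient) such that on $\R^d\setminus\bigcup_i Q_i$ a suitable maximal function of $\nu$ relative to $\mu$ is $\lesssim\lambda$, while $\sum_i \mu(Q_i)\lesssim \|\nu\|/\lambda$ and $\nu(Q_i)\approx\lambda\,\mu(Q_i)$ (up to the usual dilation factors). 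Write $\nu = g\,\mu + \sum_i b_i$, where the ``good part'' $g$ satisfies $\|g\|_{L^\infty(\mu)}\lesssim\lambda$ and $\|g\|_{L^1(\mu)}\lesssim\|\nu\|$, and each ``bad'' piece $b_i$ is supported near $Q_i$, has $\int b_i = 0$ (or $\|b_i\|\lesssim\lambda\,\mu(Q_i)$), and $\sum_i\|b_i\|\lesssim\|\nu\|$.

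For the good part, I would use $T_\mu g(x)=T(g\mu)(x)$ together with Theorem~\ref{teol2}: by Chebyshev and $L^2$-boundedness,
\[
\mu\bigl(\{x: T(g\mu)(x)>\lambda\}\bigr)\le \frac{1}{\lambda^2}\|T_\mu g\|_{L^2(\mu)}^2
\lesssim \frac{1}{\lambda^2}\|g\|_{L^2(\mu)}^2
\le \frac{1}{\lambda^2}\,\|g\|_{L^\infty(\mu)}\,\|g\|_{L^1(\mu)}
\lesssim \frac{\|\nu\|}{\lambda}.
\]
For the bad part, let $2Q_i$ denote a fixed dilate of $Q_i$ and set $\Omega=\bigcup_i 2Q_i$; then $\mu(\Omega)\lesssim\sum_i\mu(Q_i)\lesssim\|\nu\|/\lambda$, so it suffices to estimate $T\bigl(\sum_i b_i\bigr)$ on $\R^d\setminus\Omega$. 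The key point is to show
\[
\int_{\R^d\setminus\Omega} T b_i(x)\,d\mu(x)\lesssim \|b_i\|,
\]
so that summing in $i$ and applying Chebyshev again yields the desired bound $\lesssim\|\nu\|/\lambda$. To prove this estimate I would exploit the cancellation $\int b_i\,d\mu_{b_i}=0$: for $x\notin 2Q_i$, the quantity $\frac{b_i(B(x,r))}{r^n}-\frac{b_i(B(x,2r))}{(2r)^n}$ vanishes unless one of the spheres $\partial B(x,r)$, $\partial B(x,2r)$ meets $\supp(b_i)\subset Q_i$, and when it does not vanish it is controlled, via the zero-mean property and the smallness of $\diam(Q_i)$ compared with $\dist(x,Q_i)$, by $\|b_i\|\,\ell(Q_i)/(r\,\dist(x,Q_i)^n)$ or similar. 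Integrating in $r$ over the (short, of length $\lesssim\diam Q_i$) range where the spheres can hit $Q_i$, and then integrating in $x$ over $\{x: \dist(x,Q_i)\approx 2^k\ell(Q_i)\}$ and summing over $k\ge 1$, one gets a convergent geometric series producing the bound $\lesssim\|b_i\|$. This is essentially a standard Calder\'on--Zygmund kernel-type estimate, the gain $\ell(Q_i)/\dist(x,Q_i)$ playing the role of the H\"older modulus of continuity of the kernel.

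The main obstacle I anticipate is the bad-part estimate, specifically handling the ``boundary'' contribution coming from the annular region $r\approx\dist(x,Q_i)$ where the cube $Q_i$ straddles a sphere $\partial B(x,r)$: there the term $\frac{b_i(B(x,r))}{r^n}$ need not be small merely from cancellation, since $B(x,r)$ may contain only part of $Q_i$. One must instead use the crude size bound $|b_i(B(x,r))|\le\|b_i\|$ together with the fact, analogous to \rf{tt1}, that the set of radii $r$ for which $\partial B(x,r)$ (or $\partial B(x,2r)$) meets $\mathrm{conv}(Q_i)$ has length $\lesssim\diam(Q_i)$; combined with $r\approx\dist(x,Q_i)$ this still gives an $L^1(d\mu(x))$-integrable contribution after summing over dyadic annuli. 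A secondary technical point is that $T$ only makes sense a priori for finite measures, so one should check the bad part $\sum_i b_i$ and the relevant truncations are under control, e.g.\ by first working with finitely many $b_i$ and passing to the limit, or by a standard density/monotone-convergence argument. Once these pieces are assembled, \rf{eqdebbb} follows by combining the good and bad estimates, and taking $\mu=\HH^n|_\Gamma$ for a Lipschitz graph $\Gamma$ yields \rf{eqdeb0}.
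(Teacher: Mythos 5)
Your plan is correct and is essentially the paper's proof: a Calder\'on--Zygmund decomposition of $\nu$ at height $\lambda$ relative to $\mu$, Chebyshev plus the $L^2(\mu)$-boundedness of $T_\mu$ for the good part, and for the bad part the zero-mean/sphere-crossing estimate (the analogue of \rf{tt1}) integrated over dyadic annuli, with the same resolution of the straddling-radii issue you single out. The one point you gloss over is that $\nu$ need not be supported on $\supp(\mu)$, so the decomposition must use true cubes with the stopping condition $|\nu|(Q_j)>c\,\lambda\,\mu(2Q_j)$, and the bad pieces cannot be supported on $Q_j$ itself but only on a dilate $R_j=6Q_j$ (where $\mu(R_j)\approx\ell(R_j)^n$ is guaranteed); consequently the cancellation argument only applies on $\R^d\setminus 2R_j$, and the intermediate region $2R_j\setminus 2Q_j$ needs a separate estimate, which the paper obtains by treating the two pieces of $\beta_j=w_j\nu-b_j\mu$ individually (a direct size bound for $T(w_j\nu)$ using $\dist(x,\supp(w_j\nu))\gtrsim\ell(Q_j)$, and $L^2$-boundedness for $T(b_j\mu)$). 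This is a standard non-homogeneous technicality rather than a flaw in your strategy, and your tools suffice to handle it.
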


As shown in Section \ref{secqu}, this result implies that if
$\mu$ is an $n$-rectifiable measure in $\R^d$, then
$$\int_0^1 \left|\frac{\mu(B(x,r))}{r^n} - \frac{\mu(B(x,2r))}{(2r)^n}\right|^2\,\frac{dr}r< \infty
\quad
\mbox{for $\mu$-a.e.\ $x\in\R^d$.}
$$

Before proving Theorem \ref{teol1} we state the Calder\'on-Zygmund decomposition we need.

\begin{lemma} \label{lemcz}
Let $\mu$ be an $n$-AD-regular measure. For every $\nu\in M(\R^d)$ with compact support and every $\lambda>2^{d+1}\|\nu\|/\|\mu\|$ we have:
\begin{itemize}
\item[(a)] There exists a finite or countable collection of cubes $\{Q_j\}_{j\in J}$ with bounded overlap
(that is, $\sum_j\chi_{Q_j}\leq c$) and a function $f\in L^1(\mu)$ such that, for each $j\in J$,
\begin{equation}\label{eqaa1}
|\nu|(Q_j)> 2^{-d-1}\lambda\,\mu(2Q_j),
\end{equation}
\begin{equation}\label{eqaa2}
|\nu|(\eta Q_j)\leq 2^{-d-1}\lambda\,\mu(2\eta Q_j)\quad \mbox{ for every $\eta>2$,}
\end{equation}
and moreover,
\begin{equation}\label{eqaa3}
\nu=f\mu \quad\mbox{in $\R^d\setminus \bigcup_{j\in J} Q_j,\;$ with $|f|\leq\lambda$ $\mu$-a.e.}
\end{equation}

\item[(b)] For each $j\in J$, let $R_j=6Q_j$ and denote $w_j=\chi_{Q_j}\left(\sum_k\chi_{Q_k}\right)^{-1}$.
There exists a family of functions $\{b_j\}_{j\in J}$ with $\supp b_j\subset R_j$, each one with constant
sign, such that
\begin{equation}\label{eqaa4}
\int b_j\,d\mu = \int w_j\,d\nu,
\end{equation}
\begin{equation}\label{eqaa5}
\|b_j\|_{L^\infty(\mu)}\,\mu(R_j) \leq c\,|\nu|(Q_j),
\end{equation}
\begin{equation}\label{eqaa6}
\sum_{j\in J}|b_j|\leq c\,\lambda.
\end{equation}
\end{itemize}
\end{lemma}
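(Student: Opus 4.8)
The plan is to prove this by a standard Calderón--Zygmund stopping-time construction adapted to the AD-regular measure $\mu$, essentially following David's or Tolsa's treatment of CZ decompositions on non-doubling or AD-regular spaces.

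\medskip

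\noindent\textbf{Part (a).} First I would fix $\nu\in M(\R^d)$ with compact support and $\lambda>2^{d+1}\|\nu\|/\|\mu\|$. For $\mu$-a.e.\ $x$ in the support of $\mu$ that does not belong to the ``bad set,'' the averages $|\nu|(2Q)/\mu(2Q)$ over dyadic cubes $Q\ni x$ tend to a value $\le \lambda$ as $\ell(Q)\to\infty$ (this uses $\lambda>2^{d+1}\|\nu\|/\|\mu\|$ together with $\mu(2Q)\gtrsim\ell(Q)^n\to\infty$). So for $\mu$-a.e.\ $x$ with $|\nu|(2Q)>2^{-d-1}\lambda\,\mu(2Q)$ for some $Q\ni x$, there is a \emph{largest} such dyadic cube; I let $\{Q_j\}_{j\in J}$ be the collection of these maximal cubes. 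By maximality they satisfy \rf{eqaa1}, and the parent (hence any $\eta Q_j$, $\eta>2$, after adjusting constants using the AD-regularity and bounded dyadic structure) satisfies \rf{eqaa2}. Bounded overlap of $\{Q_j\}$ follows because the $Q_j$ are dyadic and maximal, hence essentially disjoint up to the fixed bounded number of dyadic neighbors needed to pass from $Q_j$ to $2Q_j$. Finally I set $f=\nu/\mu$ (Radon--Nikodym density of the absolutely continuous part, which by Lebesgue differentiation and the stopping condition is $\le\lambda$ in absolute value) on the complement of $\bigcup_j Q_j$; on that complement $\nu$ has no singular part by the differentiation theorem, giving \rf{eqaa3}.

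\medskip

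\noindent\textbf{Part (b).} Here I set $R_j=6Q_j$ and use the partition of unity $w_j=\chi_{Q_j}(\sum_k\chi_{Q_k})^{-1}$, so $\sum_j w_j=\chi_{\bigcup Q_j}$. I want $b_j$ supported on $R_j$, of constant sign, with $\int b_j\,d\mu=\int w_j\,d\nu$. The natural choice is
$$b_j=\frac{\int w_j\,d\nu}{\mu(R_j)}\,\chi_{R_j},$$
but I must be careful: to get \rf{eqaa6} I cannot simply take characteristic functions of the balls $R_j$ because they overlap and $\sum_j|\int w_j\,d\nu|/\mu(R_j)$ need not be $\lesssim\lambda$ pointwise. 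The standard fix (as in Tolsa's book, Chapter on CZ decomposition for the Cauchy transform) is an inductive/telescoping construction: one orders the cubes and defines $b_j$ by subtracting off the portions already covered by previous $b_k$'s that overlap $R_j$, using that the $Q_j$ themselves have bounded overlap and that $\mu(R_j)\approx\mu(Q_j)\gtrsim |\nu|(Q_j)/\lambda$ by \rf{eqaa1}. The mass bound \rf{eqaa5} is immediate from $|\int w_j\,d\nu|\le|\nu|(Q_j)$ and $\mu(R_j)\approx\ell(Q_j)^n$, and \rf{eqaa4} is built in. For \rf{eqaa6}, at a point $x$ only boundedly many $R_j$ contain $x$, each contributing $\|b_j\|_\infty\lesssim |\nu|(Q_j)/\mu(2Q_j)$, and one shows $\sum_{j:\,x\in R_j}|\nu|(Q_j)/\mu(2Q_j)\lesssim\lambda$ using \rf{eqaa2} applied to the largest such cube (all $Q_j$ with $x\in R_j=6Q_j$ are comparable in size or contained in a dilate of the largest one, whose $\eta Q$-average is controlled by $\lambda$).

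\medskip

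\noindent The main obstacle is getting \rf{eqaa6}: the naive ``one bump per cube'' definition fails because of overlaps among the enlarged cubes $R_j$, and one needs either the telescoping correction or a careful grouping of the $Q_j$ by scale to see that the contributions at a fixed point telescope against a single stopping estimate \rf{eqaa2}. Everything else --- maximality, \rf{eqaa1}--\rf{eqaa3}, the trivial $L^\infty$ and mass bounds --- is routine once the AD-regularity $\mu(\eta Q)\approx(\eta\ell(Q))^n$ and bounded overlap are in hand.
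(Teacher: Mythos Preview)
The paper does not prove this lemma; it cites Lemma 2.14 of \cite{Tolsa-llibre} and explicitly remarks that the $Q_j$ are \emph{true} cubes, not dyadic ones. In that reference the $Q_j$ are obtained via Besicovitch: for each $x$ in the level set one chooses a cube \emph{centered at $x$} with side length close to the supremum of those satisfying $|\nu|(Q)>2^{-d-1}\lambda\,\mu(2Q)$, then extracts a bounded-overlap subfamily. Because each $Q_j$ is centered and near-maximal in a continuous parameter, \rf{eqaa2} for every $\eta>2$ is built into the construction. Your dyadic approach does not deliver this: dyadic maximality only controls the parent and higher ancestors of $Q_j$, which are not concentric with $Q_j$. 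Passing from ``the parent satisfies the good inequality'' to ``$\eta Q_j$ satisfies \rf{eqaa2} for all $\eta>2$'' would require controlling $|\nu|$ on dyadic neighbors of the ancestors, and those neighbors may themselves lie inside some other $Q_{j'}$ and hence satisfy the \emph{bad} inequality. AD-regularity lets you compare $\mu$-measures of concentric dilates, not $|\nu|$-measures, so ``adjusting constants'' does not close this gap.

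In part (b) you correctly identify that the naive bumps fail and point to the inductive construction from Tolsa's book; that is indeed how \rf{eqaa6} is obtained there. But your last sentence undercuts this by asserting that ``at a point $x$ only boundedly many $R_j$ contain $x$,'' which is false in general: even though the $Q_j$ have bounded overlap (disjoint, in your dyadic setup), the dilates $R_j=6Q_j$ need not, since stopping cubes at nearby locations can have arbitrarily different scales. Note that by \rf{eqaa1} each naive bump already has $\|b_j\|_\infty\gtrsim\lambda$, so if unboundedly many $R_j$ meet $x$ the naive sum blows up --- this is precisely why the telescoping is needed, not an afterthought. The actual proof of \rf{eqaa6} orders the $R_j$ by decreasing size and at step $j$ places $b_j$ on the part of $R_j$ not yet saturated by $\sum_{i<j}|b_i|$, maintaining $\sum_i|b_i|\le c\lambda$ inductively; \rf{eqaa2} (for a fixed dilation factor) is what guarantees $R_j$ has enough $\mu$-mass to absorb the required integral $\int w_j\,d\nu$.
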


Let us remark that the cubes in the preceding lemma are not cubes from $\DD$, but true cubes.
Abusing notation, the side length of such a cube $Q$ will be denoted also by $\ell(Q)$.
Observe also that, in particular, that \rf{eqaa2} implies that $4\bar Q_j\cap\supp(\mu)\neq\varnothing$ and
thus $\mu(R_j)\approx\ell(R_j)^n$.
 
For the proof of the lemma the reader can see Lemma 2.14 of \cite{Tolsa-llibre}, where this is
shown in the more general situation where $\mu$ need not be doubling.

\vv

\begin{proof}[\bf Proof of Theorem \ref{teol1}]
Suppose first that $\nu\in M(\R^d)$ has compact support. 
 Clearly, we may assume that $\lambda>2^{d+1}\|\nu\|/\|\mu\|$.

For such $\lambda>0$, consider $Q_j,R_j,w_j,b_j$, for $j\in J$, and $f$ as
 in Lemma \ref{lemcz}. Then write $\nu= g\,\mu + \beta$, where
$$g\,\mu =\chi_{\R^d\setminus \bigcup_{j\in J} Q_j}\,\nu + \sum_{j\in J} b_j\,\mu$$
and
$$\beta=\sum_{j\in J}\beta_j := \sum_{j\in J}(w_j\,\nu - b_j\,\mu).$$
Observe that $\|g\|_{L^\infty(\mu)}\leq c\,\lambda$ and, for each $j\in J$, 
$$\supp(\beta_j)\subset R_j\quad \mbox{ and }\quad \beta_j(R_j)=0.$$
So $\beta_j$ is a real measure with zero mean.

By \rf{eqaa1} we have
$$\mu\Bigl(\bigcup_j 2Q_j\Bigr) \leq \frac c\lambda\, \sum_j |\nu|(Q_j) \leq \frac c\lambda
\,\|\nu\|.$$
So we only have to check that
\begin{equation}\label{eqaa7}
\mu\Bigl(\Bigl\{x\in\R^d\setminus \bigcup_j 2Q_j: T\nu(x)>\lambda\Bigr\}\Bigr)
\leq \frac c\lambda
\,\|\nu\|.
\end{equation}
Taking into account that $T_\mu$ is bounded in $L^2(\mu)$, using the fact that $\|g\|_{L^\infty(\mu)}\le c\lambda$ we derive
$$\mu\Bigl(\Bigl\{x\in\R^d\setminus \bigcup_j 2Q_j: T_\mu g(x)>\lambda/2\Bigr\}\Bigr)
\leq  \frac c{\lambda^2} \int |g|^2\,d\mu \leq \frac c{\lambda} \int |g|\,d\mu.$$
Also, by the definition of $g$ and \rf{eqaa5} we get
$$\int |g|\,d\mu \leq |\nu|
\Bigl(\R^d\setminus \bigcup_{j\in J} Q_j\Bigr) + \sum_{j\in J} \int |b_j|\,d\mu
\leq \|\nu\| + c\sum_{j\in J} |\nu|(Q_j) \leq c\,\|\nu\|.$$
Thus
\begin{equation}\label{eqgg0}
\mu\Bigl(\Bigl\{x\in\R^d\setminus \bigcup_j 2Q_j: T_\mu g(x)>\lambda/2\Bigr\}\Bigr)
\leq \frac c\lambda\,\|\nu\|.
\end{equation}

Let us turn attention to $T\beta$ now. We set
\begin{align}\label{eqgg1}
\mu\Bigl(\Bigl\{x\in\R^d\setminus \bigcup_j 2Q_j: T \beta(x)>\lambda/2\Bigr\}\Bigr)
& \leq \frac2\lambda\int_{\R^d\setminus \bigcup_j 2Q_j} T\beta\,d\mu \\
& \leq \frac2\lambda\,\sum_{j\in J}\int_{\R^d\setminus 2R_j} T\beta_j\,d\mu +  
\frac2\lambda\,\sum_{j\in J}\int_{2R_j\setminus 2Q_j} T\beta_j\,d\mu.\nonumber
\end{align}
First we will estimate the first sum on the right side. To this end, notice that
 since
$\beta_j$ has zero mean and is supported on $R_j$, for each $x\in\R^d\setminus 2R_j$ and $r>0$ we deduce that $\beta_j(B(x,r))=0$ unless
$\partial B(x,r)\cap R_j\neq\varnothing$. Of course, the analogous statement holds for $\beta_j(B(x,2r))$. Thus
\begin{align*}
|T\beta_j(x)|^2 &\leq \int_0^\infty \left(\frac{|\beta_j(B(x,r))|}{r^n} + \frac{|\beta_j(B(x,2r))|}{(2r)^n}\right)^2\,\frac
{dr}r\\
&\leq 2\int_{\{r:\partial B(x,r)\cap R_j\neq\varnothing\}} \left(\frac{|\beta_j|(R_j)}{r^n}\right)^2\,\frac
{dr}r + 2\int_{\{r:\partial B(x,2r)\cap R_j\neq\varnothing\}} \left(\frac{|\beta_j|(R_j)}{(2r)^n}\right)^2\,\frac
{dr}r.
\end{align*}
Observe that if $\partial B(x,r)\cap R_j\neq\varnothing$ or $\partial B(x,2r)\cap R_j\neq\varnothing$, then $r\approx |x-x_j|$ where $x_j$ stands for the center of $R_j$ (and of $Q_j$), because
$x\not\in 2R_j$. By (\ref{tt1}) we have 
$$|\{r>0:\partial B(x,r)\cap R_j\neq\varnothing\}|\leq \diam(R_j)\leq c\,\ell(R_j),$$
and analogously replacing $r$ by $2r$. Therefore we obtain
$$
|T\beta_j(x)|^2 \leq c\,\frac{|\beta_j|(R_j)^2}{|x-x_j|^{2n+1}}
\biggl(\int_{\{r:\partial B(x,r)\cap R_j\neq\varnothing\}}dr + \int_{\{r:\partial B(x,2r)\cap R_j\neq\varnothing\}}dr\biggr) \leq c\,\frac{|\beta_j|(R_j)^2\,\ell(R_j)}{|x-x_j|^{2n+1}}.
$$
From this estimate, using also the upper $n$-AD-regularity
of $\mu$ we infer that
\begin{equation}\label{eqgg7.5}
\int_{\R^d\setminus 2R_j} |T\beta_j|\,d\mu
\leq 
c \,|\beta_j|(R_j)\,\ell(R_j)^{1/2}\int_{\R^d\setminus 2R_j} \frac{1}{|x-x_j|^{n+1/2}}\,d\mu(x)
\leq c \,|\beta_j|(R_j)\leq c \,|\nu|(Q_j).
\end{equation}

To estimate the last term in \rf{eqgg1} we set
\begin{align*}
\int_{2R_j\setminus 2Q_j} T\beta_j\,d\mu&\leq 
c\,\ell(R_j)^{n/2}\left(\int_{2R_j\setminus 2Q_j} |T\beta_j|^2\,d\mu\right)^{1/2}\\
& \leq c\,\ell(R_j)^{n/2} \left[\left(\int_{2R_j\setminus 2Q_j} |T(w_j\nu)|^2\,d\mu\right)^{1/2}
+  \left(\int |T(b_j\mu)|^2\,d\mu\right)^{1/2}\right]
.
\end{align*}
By the $L^2(\mu)$ boundedness of $T_\mu$ and the condition \rf{eqaa5} we have
\begin{equation}\label{eqgg8}
\ell(R_j)^{n/2}\left(\int |T(b_j\mu)|^2\,d\mu\right)^{1/2}\leq c\,\ell(Q_j)^{n/2}\|b_j\|_{L^2(\mu)}\leq c\,\ell(Q_j)^{n}\,\|b_j\|_{L^\infty(\mu)}\leq c\,|\nu|(Q_j).
\end{equation}
On the other hand, for $x\in 2R_j\setminus 2Q_j$, since $\dist(x,\supp(\nu|_{Q_j}))\geq \ell(Q_j)/2$, we have
$$|T(w_j\nu)(x)|^2\leq c\,\int_{r\geq \ell(Q_j)/4} \frac{|\nu|(Q_j)^2}{r^{2n}}\,\frac{dr}r
\leq c\,\frac{|\nu|(Q_j)^2}{\ell(Q_j)^{2n}}.$$
Therefore,
$$\ell(R_j)^{n/2} \left(\int_{2R_j\setminus 2Q_j} |T(w_j\nu)|^2\,d\mu\right)^{1/2}
\leq c\,\ell(R_j)^{n/2}\,\frac{|\nu|(Q_j)}{\ell(Q_j)^n} \,\mu(2R_j)^{1/2} \leq c\,|\nu|(Q_j).$$
From this estimate and \rf{eqgg8} we deduce that
$$\int_{2R_j\setminus 2Q_j} T\beta_j\,d\mu \leq c\,|\nu|(Q_j).$$
Together with \rf{eqgg1} and \rf{eqgg7.5} this yields
$$\mu\Bigl(\Bigl\{x\in\R^d\setminus \bigcup_j 2Q_j: T \beta(x)>\lambda/2\Bigr\}\Bigr)
\leq \frac c\lambda\,\sum_{j\in J}|\nu|(Q_j)\leq \frac c\lambda\,\|\nu\|,$$
which finishes the proof of \rf{eqaa7}.

\vspace{2mm}
Suppose now that $\nu$ is not compactly supported. 
Let $N_0,N_1$ be two big positive integers, with $N_1\geq 2 N_0$.
Denote 
$\nu_{N_1} = \chi_{B(0,N_1)}\,\nu$.
It is easy to check that for $x\in B(0,N_0)$
$$T(\chi_{\R^d\setminus B(0,N_1)}\nu)(x)\leq
c\,\frac{|\nu|(\R^d\setminus B(0,N_1))}{N_1-N_0}
\leq c\,\frac{\|\nu\|}{N_1-N_0}\leq \frac\lambda2,$$
assuming $N_1$ big enough so that
$N_1-N_0> 2c\|\nu\|/\lambda$.
Thus, for such $N_1$, since $\nu_{N_1}$ has compact support, 
\begin{align*}
\mu\bigl(\{x\in B(0,N_0):|T\nu(x)|>\lambda\}\bigr)& \leq \mu\bigl(\{x\in B(0,N_0):|T\nu_{N_1}(x)|>\lambda/2\}
\bigr)\\&\leq c\,\frac{\|\nu_{N_1}\|}\lambda\leq c\,\frac{\|\nu\|}\lambda.
\end{align*}
Since $N_0$ is arbitrary and this estimate is uniform on $N_0$, \rf{eqdebbb} follows in full generality.
\end{proof}


\section{Finiteness of the square function implies $n$-rectifiability}\label{sec6}

In this section we will prove the implication (b)$\Rightarrow$(a) from Theorem \ref{teomain}.
We have to show that
if $\mu$ is a Radon measure on $\R^d$ such that, for $\mu$-a.e.\ $x\in\R^d$, $0< \Theta_*^n(x,\mu)\leq \Theta^{n,*}(x,\mu)<\infty$ and
\begin{equation}\label{eq110}
\int_0^1 \left|\frac{\mu(B(x,r))}{r^n} - \frac{\mu(B(x,2r))}{(2r)^n}\right|^2\,\frac{dr}r< \infty
,
\end{equation}
then $\mu$ is $n$-rectifiable.  Without loss of generality, to prove this result we can assume $\mu$ to be compactly
supported, and thus finite.

As in \cite{CGLT}, we denote
$$\Delta_\mu(x,r) := \frac{\mu(B(x,r))}{r^n} - \frac{\mu(B(x,2r))}{(2r)^n}.$$
Notice that for any $r>0$, $|\Delta_\mu(x,r)| \leq \|\mu\|/r^n$. Thus,
$$\int_1^\infty \Delta_\mu(x,r)^2\,\frac{dr}r \leq \|\mu\|^2 \int_1^\infty \frac{dr}{r^{2n+1}} \leq c\,\|\mu\|^2.$$
Therefore, \rf{eq110} is equivalent to 
\begin{equation}\label{eq111}
\int_0^\infty \Delta_\mu(x,r)^2\,\frac{dr}r <\infty.
\end{equation}

We consider the auxiliary function
$\vphi(x) = e^{-|x|^2}$ and, for $r>0$, we set
$$\vphi_r(x) = \frac1{r^n}\,\vphi\left(\frac xr\right).$$
Then we define 
$$\Delta_{\mu,\vphi}(x,r) := \vphi_r * \mu - \vphi_{2r}*\mu(x) = 
\int \bigl(\vphi_r(x-y) - \vphi_{2r}(x-y)\bigr)\,d\mu(y).$$

Arguing as in the proof of Corollary 3.12 from \cite{CGLT} it is easy to check that if
$\sup_{s>0}\frac{\mu(B(x,s))}{s^n}<\infty$, then
 $\Delta_{\mu,\vphi}(x,r)$ can be written
as a suitable convex combination of $\Delta_{\mu}(x,s)$, $s>0$.
 We show the details for completeness. First we look for a function $\wt \vphi_r:(0,\infty) \to (0,\infty)$ such that
\begin{equation}
\label{convfi}
\frac{1}{r^n} e^{\frac{-t^2}{r^2}}=\int_0^\infty \frac1{s^n}\chi_{[0,s]}(t) \,\wt \vphi_r(s) \,ds=\int_t^\infty \frac{\wfi(s)}{s^n}\,ds,\quad \mbox{ for } t>0.
\end{equation}
Differentiating with respect to $t$ we obtain
$$-\frac{2 t}{r^{n+2}}\,e^{\frac{-t^2}{r^2}}=- \frac{\wfi(t)}{s^n}.$$
Thus, for $r>0$ and  $t>0$, 
$$\wfi(t)=\frac{2 t^{n+1}}{r^{n+2}}e^{\frac{- t^2}{r^2}}.$$

Using \rf{convfi} we can now write
\begin{align*}
\Delta_{\mu,\vphi}(x,r)& =(\vphi_r -\vphi_{2r}) * \mu(x)\\
& = \left( \int_0^\infty \!\frac 1{s^n} \chi_{[0,s]}(|\cdot |) \,\wfi (s)\,ds)\right) \ast \mu (x) -\left( \int_0^\infty \frac 1{s^n} \chi_{[0,s]}(|\cdot |) \,\wf_{2r} (s)\,ds\right) * \mu (x)\end{align*}
By a change of  variables we have
$$\int_0^\infty \frac 1{s^n} \chi_{[0,s]}(|y-x|) \,\wf_{2r} (s)\,ds= \int_0^\infty \!\frac 1{(2s)^n} \chi_{[0,2s]}(|y-x|) \,\wfi(s)\, ds.$$
Therefore,
\begin{align*}
\Delta_{\mu,\vphi}(x,r)&= \int_0^{\infty} \left( \frac1{s^n} \chi_{B(0,s)}(\cdot)-\frac1{(2s)^n} \chi_{B(0,2s)}(\cdot)\right) * \mu (x) \, \wfi(s)\, ds = 
\int_0^{\infty} 
\Delta_\mu(x,s) \, \wfi(s)\, ds,
\end{align*}
as wished.

By Cauchy-Schwarz, then we obtain
$$\Delta_{\mu,\vphi}(x,r)^2\leq \left(\int_0^{\infty} 
\Delta_\mu(x,s)^2 \, \wfi(s)\, ds\right) 
\left(\int_0^{\infty} 
 \wfi(s)\, ds\right) \leq c\,\int_0^{\infty} 
\Delta_\mu(x,s)^2 \, \wfi(s)\, ds.$$
Thus,
$$\int_0^\infty \Delta_{\mu,\vphi}(x,r)^2\,\frac{dr}r \leq c\,
\int_0^\infty \int_0^{\infty} 
\Delta_\mu(x,s)^2 \, \wfi(s)\, ds\,\frac{dr}r.$$
Since
\begin{equation*}
\int_0^\infty \wfi (s) \, \frac{dr}{r}=2 \int_0^\infty \left(\frac{s}{r}\right)^{n+1} e^{\frac{-  s^2}{r^2}}\, \frac{dr}{r^2}=\frac2s  \int_0^\infty  t^{n+1}e^{- t^2}\,dt \lesssim \frac1s
\end{equation*}
we get
$$\int_0^\infty \Delta_{\mu,\vphi}(x,r)^2\,\frac{dr}r \leq c\,
\int_0^\infty 
\Delta_\mu(x,s)^2\, \frac{ds}s.$$

So \rf{eq111} implies that
\begin{equation}\label{eq112}
\int_0^\infty \Delta_{\mu,\vphi}(x,r)^2\,\frac{dr}r <\infty.
\end{equation}

\begin{lemma}\label{lemlimit}
Let $\mu$ be a finite Borel measure in $\R^d$ and $x\in\R^d$ such that $ \Theta^{n,*}(x,\mu)<\infty$. If
$$\int_0^\infty \Delta_{\mu,\vphi}(x,r)^2\,\frac{dr}r <\infty,$$
then 
$$\lim_{r\to0} \Delta_{\mu,\vphi}(x,r)=0.$$
\end{lemma}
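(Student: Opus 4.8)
The plan is to show that $\Delta_{\mu,\vphi}(x,r)$ cannot oscillate. Suppose, for contradiction, that $\limsup_{r\to0}|\Delta_{\mu,\vphi}(x,r)| = 2\delta > 0$. The key point is that, unlike the ``raw'' difference $\Delta_\mu(x,r)$, the mollified quantity $r\mapsto \Delta_{\mu,\vphi}(x,r)$ is a smooth (indeed real-analytic in $\log r$) function of $r$ whose derivative we can control. Concretely, I would first establish a Lipschitz-type bound for $r\mapsto \Delta_{\mu,\vphi}(x,r)$ with respect to the variable $t = \log r$: differentiating under the integral sign, $\partial_t \Delta_{\mu,\vphi}(x,e^t) = \int \psi_r(x-y)\,d\mu(y)$ for a suitable kernel $\psi_r$ built from $r\,\partial_r(\vphi_r - \vphi_{2r})$, which is again an $L^1$-normalized bump at scale $\approx r$ (a Schwartz function times polynomial, rescaled). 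Using $\Theta^{n,*}(x,\mu)<\infty$, hence $\sup_{s\le 1}\mu(B(x,s))/s^n \le C_x < \infty$, a standard estimate gives $|\partial_t \Delta_{\mu,\vphi}(x,e^t)| \le C\,C_x$ uniformly for small $r$; that is, $\Delta_{\mu,\vphi}(x,\cdot)$ as a function of $\log(1/r)$ is Lipschitz with some constant $L = L(C_x, n, d)$.

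Granting this, the contradiction is elementary. If $|\Delta_{\mu,\vphi}(x,r_k)| \ge \delta$ along a sequence $r_k \to 0$, then by the Lipschitz bound in $t=\log(1/r)$, on each interval $I_k = [\log(1/r_k),\, \log(1/r_k) + \delta/(2L)]$ we have $|\Delta_{\mu,\vphi}(x,e^{-t})| \ge \delta/2$. These intervals each have fixed length $\delta/(2L)$, and by passing to a subsequence of the $r_k$ they can be taken pairwise disjoint (and all contained in $t$ large, i.e.\ $r$ small). Hence
$$\int_0^\infty \Delta_{\mu,\vphi}(x,r)^2\,\frac{dr}{r} = \int_{-\infty}^\infty \Delta_{\mu,\vphi}(x,e^{-t})^2\,dt \ge \sum_k \int_{I_k} \frac{\delta^2}{4}\,dt = \sum_k \frac{\delta^2}{4}\cdot\frac{\delta}{2L} = +\infty,$$
contradicting the hypothesis. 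Therefore $\limsup_{r\to0}|\Delta_{\mu,\vphi}(x,r)| = 0$, which is the claim.

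The main obstacle, and the only place requiring care, is the uniform derivative bound $|\partial_t \Delta_{\mu,\vphi}(x,e^t)| \lesssim \sup_{s\le 1}\mu(B(x,s))/s^n$ for small $r$. This needs: (i) justifying differentiation under the integral sign (dominated convergence, using that $\vphi$ and its derivatives are Schwartz and $\mu$ is finite); (ii) identifying $r\,\partial_r\vphi_r$ as $\frac1{r^n}\eta(\cdot/r)$ with $\eta(z) = -n\,\vphi(z) - z\cdot\nabla\vphi(z)$, a fixed Schwartz function with fast decay; and (iii) the dyadic annular decomposition estimate $\big|\int \frac1{r^n}\eta((x-y)/r)\,d\mu(y)\big| \lesssim \sum_{j\ge0} 2^{-jN}\,\frac{\mu(B(x,2^{j+1}r))}{(2^j r)^n} \lesssim \sup_{s>0}\mu(B(x,s))/s^n$, valid since $\eta$ decays faster than any polynomial. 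Once this Lipschitz-in-$\log r$ property is in hand the rest is the soft oscillation-versus-integrability argument above.
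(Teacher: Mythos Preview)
Your proof is correct and rests on the same key estimate as the paper's: the bound $|\partial_s\Delta_{\mu,\vphi}(x,s)|\lesssim b_x/s$ (equivalently, Lipschitz in $\log r$ with constant $\lesssim b_x:=\sup_{s>0}\mu(B(x,s))/s^n$), obtained by differentiating under the integral, identifying $r\,\partial_r\vphi_r$ as a rescaled Schwartz bump, and summing over dyadic annuli. The only difference is in the packaging of the final step: the paper argues directly, partitioning $(0,1]$ into dyadic intervals $I_k$, using Chebyshev to locate in each $I_k$ a point $r'$ with $|\Delta_{\mu,\vphi}(x,r')|\leq\lambda_k^{1/3}$ (where $\lambda_k$ is the $L^2$-average on $I_k$), and then the derivative bound gives $\sup_{I_k}|\Delta_{\mu,\vphi}(x,\cdot)|\lesssim(1+b_x)\lambda_k^{1/3}\to0$; you instead argue by contradiction, using the Lipschitz bound to produce disjoint $\log r$-intervals of fixed length on which $|\Delta_{\mu,\vphi}|\geq\delta/2$, forcing the square integral to diverge. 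These are interchangeable realizations of the same ``finite $L^2(dr/r)$ plus Lipschitz in $\log r$ implies limit zero'' principle.
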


\begin{proof}
Denote $I_k=[2^{-k},\,2^{-k+1}]$ and 
$$\lambda_k= \frac1{|I_k|}\int_{I_k} \Delta_{\mu,\vphi}(x,r)^2\,dr.$$
Since
$\sum_{k\in \Z} \lambda_k<\infty$,
we have
$$\lim_{k\to\infty} \lambda_k=0.$$
By Chebichev, we get
$$\bigl|\{r\in I_k:\,|\Delta_{\mu,\vphi}(x,r)|>\lambda_k^{1/3}\}\bigr|
\leq \frac1{\lambda_k^{2/3}} \int_{I_k} \Delta_{\mu,\vphi}(x,r)^2\,dr\leq 
\frac{\lambda_k\,|I_k|}{\lambda_k^{2/3}} = \lambda_k^{1/3}\,|I_k|.$$
Thus, assuming $\lambda_k<1$, for any $r\in I_k$ there exists some $r'\in I_k$ satisfying
\begin{equation}\label{eqaa120}
|r-r'|\leq \lambda_k^{1/3}\,|I_k|\quad\mbox{ and } \quad|\Delta_{\mu,\vphi}(x,r')|\leq\lambda_k^{1/3}.
\end{equation}

Now we wish to estimate the difference between $\Delta_{\mu,\vphi}(x,r)$ and $\Delta_{\mu,\vphi}(x,r')$, for $r,r'\in I_k$. By the mean value theorem, we have
$$\bigl|\Delta_{\mu,\vphi}(x,r)-\Delta_{\mu,\vphi}(x,r')\bigr|\leq |r-r'|\,
\sup_{s\in I_k}|\partial_s \Delta_{\mu,\vphi}(x,s)|.$$
Notice that
$$|\partial_s \Delta_{\mu,\vphi}(x,s)| \leq \bigl|\partial_s (\vphi_s*\mu(x)) \bigr|
+\bigl|\partial_s (\vphi_{2s}*\mu(x)) \bigr|.
$$
We have 
\begin{align}\label{eq119}
\bigl|\partial_s (\vphi_s*\mu(x)) \bigr| & = \left|\int \partial_s \left(\frac1{s^n}\,e^{-|x-y|^2/s^2}\right)\,
d\mu(y)\right|\\
& \leq
 \int \left(\frac n{s^{n+1}}+ \frac 2{s^{n+1}}\, \frac{|x-y|^2}{s^2}\right)
 e^{-|x-y|^2/s^2}\,d\mu(y)\nonumber\\
 & = \frac1s 
 \int \frac1{s^n}\left( n +  \frac{2\,|x-y|^2}{s^2}\right)
 e^{-|x-y|^2/s^2}\,d\mu(y).\nonumber
\end{align}
Denote
$$b_x=\sup_{r>0}\frac{\mu(B(x,r))}{r^n}.$$
Observe that $b_x<\infty$ because $ \Theta^{n,*}(x,\mu)<\infty$ and $\|\mu\|<\infty$.
Using the fast decay of the function inside the integral on the right side of \rf{eq119} and
splitting the domain of integration into annuli, it
follows easily that
$$\int \frac1{s^n}\left( n +  \frac{2\,|x-y|^2}{s^2}\right)
 e^{-|x-y|^2/s^2}\,d\mu(y)\lesssim b_x.$$
So for $s\in I_k$ we obtain
$$\bigl|\partial_s (\vphi_s*\mu(x))\bigr| \lesssim \frac{b_x}s \approx \frac{b_x}{|I_k|}.$$
An analogous estimate holds replacing $s$ by $2s$, and thus
$|\partial_s \Delta_{\mu,\vphi}(x,s)| \lesssim b_x\,|I_k|^{-1}$.
Then for $r,r'\in I_k$ we get 
$$\bigl|\Delta_{\mu,\vphi}(x,r)- \Delta_{\mu,\vphi}(x,r')\bigr|\lesssim\frac{b_x}{|I_k|}\,|r-r'|.$$

Let $k$ be big enough so that $\lambda_k<1$. 
Given any $r\in I_k$ we can take $r'\in I_k$ satisfying \rf{eqaa120}.
From the last estimate we deduce that
$$\bigl|\Delta_{\mu,\vphi}(x,r)\bigr|\leq \bigl|\Delta_{\mu,\vphi}(x,r')\bigr| + \bigl|\Delta_{\mu,\vphi}(x,r)- \Delta_{\mu,\vphi}(x,r')\bigr| \lesssim \lambda_k^{1/3} + \frac{b_x}{|I_k|}\,|r-r'|
\leq (1+b_x)\lambda_k^{1/3},$$
which tends to $0$ as $k\to\infty$. Thus, the lemma follows.
\end{proof}

Our next objective consists in proving the following.

\begin{propo}\label{propoclau}
Let $\mu$ be a finite Borel measure in $\R^d$ such that, for $\mu$-a.e.\ $x\in\R^d$, $0< \Theta_*^n(x,\mu)\leq \Theta^{n,*}(x,\mu)<\infty$ and moreover
$$\lim_{r\to0} \Delta_{\mu,\vphi}(x,r)=0.$$
Then $\mu$ is $n$-rectifiable.
\end{propo}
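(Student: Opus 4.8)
The plan is to analyse the tangent measures of $\mu$ and to show that the hypothesis $\Delta_{\mu,\varphi}(x,r)\to0$ forces them, at $\mu$-a.e.\ point, to be $n$-AD-regular measures with vanishing square function, hence uniformly $n$-rectifiable; the $n$-rectifiability of $\mu$ will then be read off from standard tangent-measure criteria. First we make a routine reduction. Since $0<\Theta^n_*(x,\mu)\le\Theta^{n,*}(x,\mu)<\infty$ $\mu$-a.e., by Egorov's theorem there are compact sets $G_k$ with $\mu(\R^d\setminus\bigcup_k G_k)=0$ on each of which the ratios $\mu(B(x,r))/r^n$ are bounded above and below (uniformly in $x\in G_k$, for $r$ below a fixed $r_0$) and the convergence $\Delta_{\mu,\varphi}(x,r)\to0$ is uniform. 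As it suffices to prove that each $\mu|_{G_k}$ is $n$-rectifiable, we fix $G=G_k$, set $\sigma=\mu|_G$, and work at a point $x_0$ which is a density point of $G$ with respect to $\mu$ --- the generic situation; then $\operatorname{Tan}(\sigma,x_0)=\operatorname{Tan}(\mu,x_0)$, and every $\nu\in\operatorname{Tan}(\mu,x_0)$ is nonzero because $\Theta^n_*(x_0,\mu)>0$.

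The core of the proof is to transfer the hypothesis to such a tangent measure $\nu=\lim_i\mu_{x_0,r_i}$, where $\mu_{x_0,r}(A)=r^{-n}\mu(x_0+rA)$. Since $x_0$ is a density point of $G$, each $z\in\supp\nu$ is of the form $z=\lim_i(y_i-x_0)/r_i$ with $y_i\in G$. Writing $z_i=(y_i-x_0)/r_i$, one computes that $\Delta_{\mu,\varphi}(y_i,sr_i)=\varphi_s*\mu_{x_0,r_i}(z_i)-\varphi_{2s}*\mu_{x_0,r_i}(z_i)$ for each fixed $s>0$. The uniform upper density bound on $G$ makes the measures $\mu_{x_0,r_i}$ tight on bounded sets, so convolution with the Gaussians $\varphi_s,\varphi_{2s}$ passes to the weak limit and $\Delta_{\mu,\varphi}(y_i,sr_i)\to\Delta_{\nu,\varphi}(z,s)$; on the other hand $sr_i\to0$ and $y_i\in G$, so the uniform convergence on $G$ forces this quantity to tend to $0$. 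Hence $\Delta_{\nu,\varphi}(z,s)=0$ for every $z\in\supp\nu$ and every $s>0$. Inverting the representation of $\Delta_{\nu,\varphi}(z,\cdot)$ as an average of $\Delta_\nu(z,\cdot)$ established above --- a uniqueness statement for the Laplace transform in the variable $s^2$ --- this upgrades to $\Delta_\nu(z,s)=0$ for every $z\in\supp\nu$ and a.e.\ $s>0$. Together with the density bounds inherited from $G$, the identity $\nu(B(z,s))/s^n=\nu(B(z,2s))/(2s)^n$ then shows that $\nu$ is $n$-AD-regular.

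Now the characterization of uniform rectifiability obtained in \cite{CGLT} --- the converse to Theorem \ref{teocglt} --- applies to $\nu$: its square-function Carleson estimate is trivially satisfied because $\Delta_\nu(\cdot,r)=0$ $\nu$-a.e.\ for a.e.\ $r$, so $\nu$ is uniformly $n$-rectifiable, and in particular $n$-rectifiable. This holds for every $\nu\in\operatorname{Tan}(\mu,x_0)$ and $\mu$-a.e.\ $x_0$. Since, for $\mu$-a.e.\ $x_0$, $\operatorname{Tan}(\mu,x_0)$ is closed under taking tangent measures at points of the support (Preiss \cite{Preiss}; see also \cite{Mattila-llibre}), and since an $n$-rectifiable measure has a flat tangent measure $c\,\HH^n|_V$ at a.e.\ point of its support, it follows that $\operatorname{Tan}(\mu,x_0)$ contains a flat measure for $\mu$-a.e.\ $x_0$. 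As $\Theta^n_*(x,\mu)>0$ $\mu$-a.e., a classical rectifiability criterion (see \cite{Mattila-llibre}, and the arguments of \cite{Preiss} and \cite{CGLT}) now yields that $\mu$ is $n$-rectifiable.

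The step where the deep input is indispensable is the passage from ``$\Delta_\nu$ vanishes on $\supp\nu$'' to ``$\nu$ is uniformly rectifiable'': that vanishing only forces $s\mapsto\nu(B(z,s))/s^n$ to be log-periodic, not constant, so one cannot invoke Preiss' classification of $n$-uniform measures directly, and it is exactly here that the full characterization of uniform rectifiability from \cite{CGLT} enters. The remaining delicate point is making the transfer of the uniform convergence through the blow-up precise, which is what motivates the Egorov reduction and the passage to density points of $G$ at the beginning.
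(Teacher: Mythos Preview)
Your argument tracks the paper's closely up through the point where you show that every tangent measure $\nu\in\operatorname{Tan}(\mu,x_0)$ is $n$-AD-regular and satisfies $\Delta_{\nu,\varphi}(z,s)=0$ for all $z\in\supp\nu$ and $s>0$ (the Egorov reduction, the density-point argument, and the passage to the weak limit are essentially the same as in the paper's Lemma~\ref{lemlem}). From that point on the two approaches diverge, and yours does not close.

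The paper invokes Theorem~3.10 of \cite{CGLT}, which asserts that an $n$-AD-regular measure $\nu$ with $\Delta_{\nu,\varphi}\equiv0$ on $\supp\nu\times(0,\infty)$ is $n$-\emph{uniform}. Once every tangent measure is $n$-uniform, the paper finishes via: Kirchheim--Preiss $\Rightarrow$ $\nu$ is $n$-rectifiable $\Rightarrow$ $\nu$ has a flat tangent somewhere; tangent-of-tangent $\Rightarrow$ $\operatorname{Tan}(\mu,x_0)$ contains a flat measure; Preiss' dichotomy (all tangent measures $n$-uniform and one flat $\Rightarrow$ all flat); and finally Marstrand--Mattila. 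You instead use your Laplace-inversion step (which is fine) to get $\Delta_\nu(z,s)=0$ for a.e.\ $s$, and then the Carleson characterization from \cite{CGLT} to conclude only that $\nu$ is uniformly $n$-rectifiable --- strictly weaker than $n$-uniform. Your final paragraph explicitly disclaims $n$-uniformity. But then Preiss' dichotomy is unavailable, since it genuinely requires every member of $\operatorname{Tan}(\mu,x_0)$ to be $n$-uniform, and your appeal to ``a classical rectifiability criterion'' to pass from the existence of \emph{some} flat tangent measure at $\mu$-a.e.\ $x_0$ to the $n$-rectifiability of $\mu$ is the gap: no such criterion is standard, and indeed the whole force of Preiss' dichotomy is to upgrade ``some flat tangent'' to ``all flat tangents''. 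Note also that your disclaimer is too pessimistic: $\Delta_{\nu,\varphi}\equiv0$ on all of $\supp\nu$ \emph{does} force $n$-uniformity --- this is exactly what Theorem~3.10 of \cite{CGLT} provides --- so the detour through uniform rectifiability was unnecessary and is what created the gap.
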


It is clear that the preceding proposition together with Lemma \ref{lemlimit} completes the proof of (b)$\Rightarrow$(a) from Theorem \ref{teomain}. 

We will prove Proposition \ref{propoclau} by using the so called tangent measures.
Given $x\in\R^d$ and $r>0$, denote by $T_{x,r}$ the homothety that
maps $B(x,r)$ to $B(0,1)$. That is,
$$T_{x,r}(y) = \frac1r\,(y-x).$$
Observe that the image measure of $\mu$ by $T_{x,r}$ satisfies 
$$T_{x,r}\#\mu(A) =\mu(rA+x),\qquad\mbox{for $A\subset\R^d$.}$$
One says that 
$\nu$ is a tangent measure of $\mu$ at $x$ if $\nu$ is a non-zero Radon measure and
there are a sequences $\{r_k\}_k$, $\{c_k\}_k$
of positive numbers with $\lim_{k\to\infty}r_k=0$
such that the measures
$c_k\,T_{x,r_k}\#\mu$ converge weakly to $\nu$ as $k\to\infty$. 

A measure $\nu$ in $\R^d$ is called $n$-flat if there exists an $n$-plane $L\subset\R^d$
and $c>0$ such that $\nu=c\,\HH^n|_L$. On the other hand, $\nu$ is called $n$-uniform
if it is a non-zero measure and there exists $c>0$ such that
$$\nu(B(x,r))=c\,r^n\quad \mbox{for all $x\in\supp(\nu)$, $r>0$.}$$

By the so called Marstrand-Mattila rectifiability
criterion (see Theorem 16.7 of \cite{Mattila-llibre}) if $\mu$ 
is a finite Borel measure in $\R^d$ such that, for $\mu$-a.e.\ $x\in\R^d$, $0< \Theta_*^n(x,\mu)\leq \Theta^{n,*}(x,\mu)<\infty$, it turns out that $\mu$ is $n$-rectifiable if and only
if, for $\mu$-a.e.\ $x\in\R^d$, all tangent measures at $x$ are $n$-flat. We will apply
this criterion to prove Proposition \ref{propoclau}.
The first step consists in showing that for $\mu$-a.e.\ $x\in\R^d$ all tangent
measures at $x$ are $n$-uniform:

\begin{lemma}\label{lemlem}
Let $\mu$ be a finite Borel measure in $\R^d$ such that, for $\mu$-a.e.\ $x\in\R^d$, $0< \Theta_*^n(x,\mu)\leq \Theta^{n,*}(x,\mu)<\infty$ and moreover
\begin{equation}\label{eqaaa209}
\lim_{r\to0} \Delta_{\mu,\vphi}(x,r)=0.
\end{equation}
Then all tangent measures of $\mu$ at $x$ are $n$-uniform for $\mu$-a.e.\ $x\in\R^d$.
\end{lemma}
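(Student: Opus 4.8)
The plan is to fix a point $x$ at which the hypotheses hold --- i.e.\ $0<\Theta_*^n(x,\mu)\le\Theta^{n,*}(x,\mu)<\infty$ and $\lim_{r\to0}\Delta_{\mu,\vphi}(x,r)=0$ --- and to show directly that any tangent measure $\nu\in\mathrm{Tan}(\mu,x)$ is $n$-uniform. So let $\nu=\lim_k c_k\,T_{x,r_k}\#\mu$ weakly, with $r_k\to0$. The key observation is that the smoothed density differences behave well under the blow-up: a change of variables shows that for any fixed $s>0$,
\[
\Delta_{c_k T_{x,r_k}\#\mu,\,\vphi}(0,s)
= c_k\,r_k^{\,n}\,\Delta_{\mu,\vphi}(x,\,r_k s).
\]
Since $r_ks\to0$, the right-hand side is $c_kr_k^n$ times something tending to $0$; the normalizing constants $c_kr_k^n$ stay comparable to $1$ because $\Theta_*^n(x,\mu)>0$ and $\Theta^{n,*}(x,\mu)<\infty$ force $c_k r_k^n\approx 1$ (one compares $c_kT_{x,r_k}\#\mu(B(0,1))=c_k\mu(B(x,r_k))$ with $r_k^n$, along a subsequence if necessary, using that the mass of the limit on balls is neither $0$ nor $\infty$ by the density bounds which pass to $\nu$ via the standard tangent-measure estimates, e.g.\ Mattila Ch.~14). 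Hence $\Delta_{c_k T_{x,r_k}\#\mu,\vphi}(0,s)\to0$ for every fixed $s>0$.

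Next I would pass this to the limit measure. Because $\vphi_s-\vphi_{2s}$ is a fixed continuous function with rapid decay, and the measures $c_kT_{x,r_k}\#\mu$ converge weakly to $\nu$ with uniformly bounded mass on every fixed ball (again from the density bounds), we get
\[
\Delta_{\nu,\vphi}(0,s)=(\vphi_s-\vphi_{2s})*\nu(0)=\lim_k (\vphi_s-\vphi_{2s})*(c_kT_{x,r_k}\#\mu)(0)=0
\quad\text{for every }s>0.
\]
The same argument applied at an arbitrary point $z\in\supp(\nu)$ --- shifting the basepoint and using that weak convergence plus the uniform local mass bound let us evaluate the convolution against $\vphi_s(z-\cdot)-\vphi_{2s}(z-\cdot)$ --- gives $\Delta_{\nu,\vphi}(z,s)=0$ for all $z\in\supp(\nu)$ and all $s>0$. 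In other words $\vphi_s*\nu(z)=\vphi_{2s}*\nu(z)$ for all $s>0$, $z\in\supp(\nu)$, so $z\mapsto\vphi_s*\nu(z)$ is independent of $s$ (iterate over the dyadic scale and use continuity in $s$). Writing out $\vphi_s*\nu(z)=s^{-n}\int e^{-|z-y|^2/s^2}\,d\nu(y)$, the fact that this is independent of $s$ for every $z\in\supp\nu$ is exactly the statement that $\nu$ is $n$-uniform: inverting the Gaussian transform (the map $s\mapsto s^{-n}\int e^{-|z-y|^2/s^2}d\nu(y)$ determines $r\mapsto\nu(B(z,r))/r^n$ by the convex-combination/Laplace-transform relation \rf{convfi}, so its being constant in $s$ forces $\nu(B(z,r))/r^n$ constant in $r$), yielding $\nu(B(z,r))=c\,r^n$ for all $z\in\supp\nu$, $r>0$. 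Since this holds for every tangent measure and $\mu$-a.e.\ $x$, the lemma follows.

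The main obstacle is the passage from ``$\Delta_{\nu,\vphi}(z,\cdot)\equiv0$'' to genuine $n$-uniformity, i.e.\ making rigorous the inversion of the smoothing. The clean way is to reuse the computation already in the text: \rf{convfi} expresses $\vphi_r$ as an explicit positive superposition $\int_0^\infty s^{-n}\chi_{B(0,s)}\,\wfi(s)\,ds$, and the derivation there shows $\Delta_{\nu,\vphi}(z,r)=\int_0^\infty\Delta_\nu(z,t)\,\wfi(t)\,dt$ with a strictly positive kernel $\wfi$; combined with the scaling $\wfi[\lambda r](t)$ one recognizes this as (a dilation-invariant family of) a one-sided Laplace-type transform of the bounded function $t\mapsto\Delta_\nu(z,t)=\nu(B(z,t))/t^n-\nu(B(z,2t))/(2t)^n$, whose vanishing for all $r$ forces $\Delta_\nu(z,\cdot)\equiv0$; together with $0<\Theta_*^n\le\Theta^{n,*}<\infty$ (inherited by $\nu$) this self-improves to $\nu(B(z,t))/t^n$ being a constant independent of $t$ and of $z\in\supp\nu$. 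The only other point needing care is justifying the uniform local mass bounds on $c_kT_{x,r_k}\#\mu$ so that weak convergence may be tested against the Gaussians, but this is exactly the standard tangent-measure machinery under two-sided density bounds and can be cited from \cite{Mattila-llibre}.
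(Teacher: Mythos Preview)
Your overall strategy --- pass the vanishing of $\Delta_{\mu,\vphi}$ to the blow-ups, conclude $\Delta_{\nu,\vphi}(z,s)=0$ on $\supp(\nu)$, then recover $n$-uniformity --- is exactly the paper's route. The inversion step you outline is sound: from $\vphi_r*\nu(z)=\int_0^\infty \frac{\nu(B(z,\rho))}{\rho^n}\,\wt\vphi_r(\rho)\,d\rho$ being constant in $r$, the substitution $u=r^{-2}$, $\tau=\rho^2$ turns the identity into a vanishing Laplace transform, forcing $\nu(B(z,\rho))/\rho^n$ constant; the paper simply cites \cite[Theorem 3.10]{CGLT} for this, but your sketch can be made rigorous.

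There is however a genuine gap at the step ``the same argument applied at an arbitrary point $z\in\supp(\nu)$''. Your change of variables gives
\[
\Delta_{c_kT_{x,r_k}\#\mu,\,\vphi}(z,s)=c_k\,r_k^{\,n}\,\Delta_{\mu,\vphi}(x+r_k z,\,r_ks),
\]
so to conclude $\Delta_{\nu,\vphi}(z,s)=0$ you need $\Delta_{\mu,\vphi}(x+r_k z,\,r_ks)\to0$. But the hypothesis \rf{eqaaa209} is only pointwise at the fixed base point $x$; the points $x+r_k z$ are moving, need not lie in the $\mu$-full-measure set where the limit is known, and there is no uniformity assumed. Weak convergence plus local mass bounds let you pass the test function $\vphi_s(z-\cdot)-\vphi_{2s}(z-\cdot)$ to the limit, but they do not tell you the prelimit integrals vanish.

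The paper handles exactly this point: it applies Egoroff to obtain a compact set $F$ with $\mu(\R^d\setminus F)<\ve$ on which the convergence in \rf{eqaaa209} is \emph{uniform}, restricts to $\mu$-density points $x$ of $F$, and then uses the standard fact (from the proof of \cite[Lemma~14.7(1)]{Mattila-llibre}) that every $z\in\supp(\nu)$ is a limit $z=\lim_k(x_k-x)/r_k$ with $x_k\in F$. The computation then lands on $\Delta_{\mu,\vphi}(x_k,\,r\,r_k)$ with $x_k\in F$, and uniform convergence on $F$ finishes it. You should incorporate this Egoroff/density-point step; without it the argument for $z\neq0$ does not go through.
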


\begin{proof}
We will show that for $\mu$ almost all $x\in\R^d$, any tangent measure $\nu$ at $x$
is $n$-AD-regular and satisfies 
\begin{equation}\label{eqaaa210}
\Delta_{\nu,\vphi}(x,r)=0\quad \mbox{ for all $x\in\supp(\nu)$ and all $r>0$.}
\end{equation}
By Theorem 3.10 from \cite{CGLT}, this implies that $\nu$ is $n$-uniform. 

The $n$-AD-regularity of any tangent measure $\nu$ at $x$, for $\mu$-a.e.\ 
$x\in\R^d$, follows from the fact that $0< \Theta_*^n(x,\mu)\leq \Theta^{n,*}(x,\mu)<\infty$
$\mu$-a.e., as shown in Lemma 14.7(1) of \cite{Mattila-llibre}.

We turn now our attention to \rf{eqaaa210}. To prove this we follow the same approach of 
\cite[Lemma 20.7]{Mattila-llibre} in connection with the existence principal values for singular integrals. 
For every $\ve>0$, by Egoroff's
theorem we can find a compact set $F$ with $\mu(\R^d\setminus F)<\ve$ where
the convergence \rf{eqaaa209} is uniform. Moreover, $F$ can be chosen so that
$0< \Theta_*^n(x,\mu)\leq \Theta^{n,*}(x,\mu)<\infty$ for all $x\in F$. 

Let $x\in F$ be a $\mu$-density point of $F$. That is,
$$\lim_{r\to 0}\frac{\mu(B(x,r)\setminus F)}{\mu(B(x,r))}=0.$$
We claim that if $\nu$ is a tangent measure of $\mu$ at $x$, then \rf{eqaaa210} holds. 
 To see this, take  sequences
$c_k,r_k>0$ such that $c_k\,T_{x,r_k}\#\mu$ converges weakly to $\nu$.
By
\cite[Remark 14.4(1)]{Mattila-llibre} we may assume that $c_k=1/\mu(B(x,r_k))$.
Moreover, we can take a sequence of points $x_k\in F$ such that
$$z_k = \frac{x_k-x}{r_k}\to z\quad\mbox{ as $k\to\infty$.}$$
This is also shown in the proof of  \cite[Lemma 14.7(1)]{Mattila-llibre}.

Notice that
$$\int \bigl(\vphi_r(z-y) - \vphi_{2r}(z-y)\bigr)\,d\nu(y)
= \lim_{k\to\infty} \frac1{\mu(B(x,r_k))} \int \bigl(\vphi_r(z_k-y) - \vphi_{2r}(z_k-y)\bigr)\,dT_{x,r_k}\#\mu(y).$$
This follows from the weak convergence of $\frac1{\mu(B(x,r_k))} T_{x,r_k}\#\mu$ to $\nu$ and 
the uniform convergence
$$\bigl(\vphi_r(z_k-\cdot) - \vphi_{2r}(z_k-\cdot)\bigr) \to \bigl(\vphi_r(z-\cdot) - \vphi_{2r}(z-\cdot)\bigr)
\quad\mbox{ as $k\to\infty$,}$$
taking also into account the fast decay at $\infty$ of $\vphi_r$ and $\vphi_{2r}$.
Then we have
\begin{align*}
\Delta_{\nu,\vphi}(x,r) & = \lim_{k\to\infty}\frac1{\mu(B(x,r_k))}  \int \bigl(\vphi_r(z_k-y) - \vphi_{2r}(z_k-y)\bigr)\,dT_{x,r_k}\#\mu(y)\\
&=\lim_{k\to\infty}\frac1{\mu(B(x,r_k))} \int \bigl(\vphi_r(z_k-T_{x,r_k}(y)) - \vphi_{2r}(z_k-
T_{x,r_k}(y))\bigr)\,d\mu(y)\\
& = \lim_{k\to\infty}\frac1{\mu(B(x,r_k))} \int \Bigl(\vphi_r\Bigl(\frac{x_k-y}{r_k}\Bigr) - \vphi_{2r}\Bigl(\frac{x_k-y}{r_k}\Bigr) \Bigr)\,d\mu(y)\\
&= \lim_{k\to\infty}\frac{r_k^n}{\mu(B(x,r_k))} \int \bigl(\vphi_{r\,r_k}(x_k-y) - \vphi_{2\,r\,r_k}(x_k-y) \bigr)\,d\mu(y).
\end{align*}
Recalling that $ \Theta_*^n(x,\mu)>0$ and using the uniform convergence of \rf{eqaaa209} in $F$, we infer that the last limit vanishes, and so $\Delta_{\nu,\vphi}(x,r)=0$, as wished.
\end{proof}
\vv

\begin{propo}\label{propo2}
Let $\mu$ be a Radon measure in $\R^d$ such that, for $\mu$-a.e.\ $x\in\R^d$, $0< \Theta_*^n(x,\mu)\leq \Theta^{n,*}(x,\mu)<\infty$. If for 
 $\mu$-a.e.\ $x\in\R^d$ all tangent measures of $\mu$ at $x$ are $n$-uniform, then $\mu$ is
 rectifiable.
 \end{propo}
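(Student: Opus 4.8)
We want to show that if $\mu$ satisfies the density bounds $0 < \Theta^n_*(x,\mu) \le \Theta^{n,*}(x,\mu) < \infty$ $\mu$-a.e.\ and all tangent measures of $\mu$ are $n$-uniform $\mu$-a.e., then $\mu$ is $n$-rectifiable. By the Marstrand--Mattila rectifiability criterion (Theorem 16.7 of \cite{Mattila-llibre}) it suffices to show that for $\mu$-a.e.\ $x$, every tangent measure of $\mu$ at $x$ is $n$-flat. So the whole problem reduces to upgrading ``$n$-uniform'' to ``$n$-flat'' at $\mu$-a.e.\ point.

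The key input here is Preiss' deep structural analysis of $n$-uniform measures \cite{Preiss}: an $n$-uniform measure in $\R^d$ is either $n$-flat or belongs to a very restricted exceptional class, and in any case its tangent measures \emph{at infinity} are $n$-flat (for $n=1,2$ all $n$-uniform measures are flat; for general $n$ one uses that every $n$-uniform measure has a flat tangent measure at infinity, and more importantly that tangent measures of tangent measures are again tangent measures). The strategy is the standard ``tangent measures of tangent measures are tangent measures'' principle (Mattila \cite[Theorem 14.16]{Mattila-llibre}): first I would fix a $\mu$-typical point $x$ and an arbitrary tangent measure $\nu$ of $\mu$ at $x$; by hypothesis $\nu$ is $n$-uniform. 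Then I would take a tangent measure of $\nu$ at infinity (or at a suitable point), which by Preiss' results can be taken $n$-flat, and this is again a tangent measure of $\mu$ at $x$. The point is to then run the argument so as to conclude that $\nu$ \emph{itself} was flat — this is where one invokes the rigidity in Preiss' theorem together with the connectedness properties of the cone of tangent measures (Mattila \cite[Theorem 14.16(2)]{Mattila-llibre}, that $\nu$ has a flat tangent measure somewhere and flatness is open and closed along the relevant family, forcing $\nu$ flat).

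More precisely, I expect the argument to go: since $\nu$ is $n$-uniform, all its tangent measures (at every point and at infinity) are themselves $n$-uniform, and by Preiss at least one of them is $n$-flat. But for $n$-uniform measures, having one flat tangent measure propagates: by \cite[Theorem 14.16]{Mattila-llibre} the set of points of $\supp\nu$ where $\nu$ has a flat tangent measure is relatively open, and a connectedness/density argument (using that $\nu$ is $n$-uniform, hence $\supp\nu$ is $n$-AD-regular and in particular $\supp\nu$ is connected in the relevant sense, or using Preiss' classification directly) forces $\nu$ to be flat everywhere. Thus every tangent measure of $\mu$ at $\mu$-a.e.\ $x$ is flat, and Marstrand--Mattila gives rectifiability. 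Alternatively, and perhaps more cleanly, one quotes directly the consequence of Preiss' work that an $n$-uniform measure all of whose tangent measures at infinity are flat must itself be flat — but the cleanest route is simply: \emph{flat tangent measures of $n$-uniform measures are forced by Preiss, and since tangent measures of tangent measures of $\mu$ are tangent measures of $\mu$, every tangent measure of $\mu$ has a flat tangent measure, which by the openness/closedness of the set of flat tangent measures combined with $n$-uniformity yields flatness}.

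\textbf{Main obstacle.} The delicate point is the passage from ``$n$-uniform'' to ``$n$-flat'': one cannot avoid invoking the hard part of Preiss' theorem on the structure of $n$-uniform measures, and one must be careful about \emph{which} tangent measures (at a point vs.\ at infinity) are being used and about the fact that for $n \ge 3$ there genuinely exist non-flat $n$-uniform measures (e.g.\ the light cone for $n=3$, $d=4$). The argument must therefore exploit that we have \emph{all} tangent measures of $\mu$ $n$-uniform (not just one), so that the flat tangent measures guaranteed by Preiss at infinity are again tangent measures of $\mu$ at the original point, and then a connectedness argument in the space of tangent measures closes the loop. Getting this bookkeeping right — and citing the precise statements from \cite{Preiss} and \cite[Chapter 14, 16]{Mattila-llibre} — is the crux; the rest is routine application of the Marstrand--Mattila criterion.
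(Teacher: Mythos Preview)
Your proposal is essentially correct and follows the same overall architecture as the paper: reduce to flatness of all tangent measures via the Marstrand--Mattila criterion, use \cite[Theorem 14.16]{Mattila-llibre} (``tangents of tangents are tangents''), obtain at least one flat tangent measure at $\mu$-a.e.\ point, and then invoke Preiss' result (see also \cite[Theorem 6.10]{DeL}) that if all tangent measures at $x$ are $n$-uniform and at least one is $n$-flat, then all are $n$-flat.

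The one place where the paper is cleaner than your sketch is the step producing a flat tangent of a given $n$-uniform measure $\nu$ \emph{at a point of its support}. You reach for Preiss' structure theory and tangent measures \emph{at infinity}, but Mattila's Theorem 14.16 only guarantees that tangent measures of $\nu$ at finite points $y\in\supp\nu$ are again in ${\rm Tan}(\mu,x)$; tangent measures at infinity are not covered by that statement. The paper sidesteps this by citing Kirchheim--Preiss \cite{KiP}: any $n$-uniform measure is supported on a real analytic $n$-variety, hence is $n$-rectifiable, hence has an $n$-flat tangent at $\HH^n$-a.e.\ point of its support. This feeds directly into Theorem 14.16 without any detour through blow-downs. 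Your ``or at a suitable point'' hedge lands on the right idea, but you would need to name Kirchheim--Preiss (or an equivalent source) to justify it; the connectedness/openness discussion you propose for propagating flatness within $\supp\nu$ is not needed and is not what closes the argument---the connectedness that matters is in the cone ${\rm Tan}(\mu,x)$, and that is exactly what the cited Preiss/De Lellis result packages.
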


Clearly, this result in conjunction with Lemma \ref{lemlem} proves Proposition \ref{propoclau}
and concludes the proof of (b)$\Rightarrow$(a) from Theorem \ref{teomain}.

\begin{proof}
As remarked above, by the Marstrand-Mattila criterion, it is enough to show that 
for 
 $\mu$-a.e.\ $x\in\R^d$ all tangent measures of $\mu$ at $x$ are $n$-flat.
To prove the proposition we will take into account that, for $\mu$-a.e.\ $x\in\R^d$, if $\nu$ is tangent measure
of $\mu$ at $x$, then any tangent measure $\sigma$ of $\nu$ at any point $y\in\supp(\nu)$ is also a tangent
measure of $\mu$ at $x$. See Theorem 14.16 from \cite{Mattila-llibre}.

By a result of Kirchheim and Preiss \cite{KiP} it follows that any $n$-uniform measure is supported on an
$n$-dimensional real analytic variety. In particular, it turns out that it is $n$-rectifiable and thus
has flat tangent measures at some points in its support. Thus we deduce that for $\mu$-a.e.\ $x\in\R^d$ 
there exists flat tangent measures to $\mu$ at $x$. 

To summarize, we know that for $\mu$-a.e.\ $x\in\R^d$ all tangent measures at $x$ are $n$-uniform and at least one of the tangent measures is $n$-flat. It is shown in \cite{Preiss} that this implies that
all tangent measures at $x$ are flat. Indeed this is one of the key ingredients of the proof
of Preiss' theorem, which has been stated in \rf{teopreiss}. See also 
Theorem 6.10 of the nice monograph by De Lellis \cite{DeL} for a very transparent argument. So the proposition follows.
\end{proof}


\section{Proof of {\rm (c)$\Leftrightarrow$(a)} from Theorem \ref{teomain}}

Let $\mu$ be a Radon measure in $\R^d$ such that, for $\mu$-a.e.\ $x\in\R^d$, $0< \Theta_*^n(x,\mu)\leq \Theta^{n,*}(x,\mu)<\infty$. 
We will show in this section that $\mu$ is $n$-rectifiable if and only if
\begin{equation}\label{eqaaa250}
\lim_{r\to0}\left|\frac{\mu(B(x,r))}{r^n} - \frac{\mu(B(x,2r))}{(2r)^n}\right| =0
\quad \mbox{ for $\mu$-a.e.\ $x\in\R^d$.}
\end{equation}

Notice first that if $\mu$ is $n$-rectifiable, then the density $\Theta^n(x,\mu)$ exists $\mu$-a.e.\ and thus
\rf{eqaaa250} holds.

To prove the converse implication we may assume that $\mu$ is compactly supported and thus finite. By Proposition
\ref{propoclau} it is enough to show that 
$$\lim_{r\to0} \Delta_{\mu,\vphi}(x,r)=0.$$
To this end, recall that $\Delta_{\mu,\vphi}(x,r)$ can be written as a convex combination of the
terms of $\Delta_\mu(x,s)$, $s>0$. Indeed, as shown in the proof of Corollary 3.12 from \cite{CGLT},
we have
$$\Delta_{\mu,\vphi}(x,r) = \int_0^\infty \Delta_\mu(x,s)\,\wt \vphi_r(s)\,ds,$$
where
$$\wt\vphi_r(s) = \frac{2\,s^{n+1}}{r^{n+2}}\,e^{-s^2/r^2}.$$
For $\lambda\geq0$, we split the integral as follows:
$$
\bigl|\Delta_{\mu,\vphi}(x,r)\bigr| \leq \int_0^{\lambda\,r} \bigl|\Delta_\mu(x,s)\bigr|\,\wt \vphi_r(s)\,ds
+ \int_{\lambda\,r}^\infty \bigl|\Delta_\mu(x,s)\bigr|\,\wt \vphi_r(s)\,ds.$$
We have
$$\int_{\lambda\,r}^\infty \wt \vphi_r(s)\,ds =
\int_{\lambda\,r}^\infty \frac{2\,s^{n+1}}{r^{n+2}}\,e^{-s^2/r^2}\,ds = 
\int_{\lambda}^\infty 2\,t^{n+1}\,e^{-t^2}\,dt.$$
Therefore, for $0<r<1$, choosing $\lambda = r^{-1/2}$ we get
$$\bigl|\Delta_{\mu,\vphi}(x,r)\bigr| \leq c\,\sup_{0<s\leq r^{1/2}}\bigl|\Delta_\mu(x,s)\bigr|
+ \sup_{s>0}\bigl|\Delta_\mu(x,s)\bigr|\,
\int_{r^{-1/2}}^\infty 2\,t^{n+1}\,e^{-t^2}\,dt.$$
The first term on the right side tends to $0$ as $r\to0$ because of \rf{eqaaa250}, 
and the second too because $\sup_{s>0}\bigl|\Delta_\mu(x,s)\bigr|<\infty$, taking into account
that $\Theta^{n,*}(x,\mu)<\infty$ and $\|\mu\|<\infty$, by our assumption.

\vvv

\end{document}